\newtheorem{proposition}{Proposition}[section] %denna med i latex
\newtheorem{thm}[proposition]{Theorem}
\newtheorem{corollary}[proposition]{Corollary}
\newtheorem{remark}[proposition]{Remark}
\newtheorem{example}[proposition]{Example}
\newcommand{\R}{\ensuremath{{\mathbb R}}}
\newcommand{\e}{\ensuremath{{\rm e}}}
\newcommand{\ch}{\ensuremath{{\rm ch}}}
\newcommand{\sh}{\ensuremath{{\rm sh}}}
\newcommand{\E}{\ensuremath{{\mathbb E}}}
\newcommand{\PP}{\ensuremath{{\mathbb P}}}
\begin{document}
\title{Differentiablity of excessive functions of one-dimensional
  diffusions and the principle of smooth fit }
\author{Paavo Salminen\;\thanks{\AA bo Akademi, Department of Natural Sciences,
    Mathematics and Statistics, \small FIN-20500 \AA bo, Finland, e-mail: 
phsalmin@abo.fi, tbao@abo.fi}
  \thanks{Research supported in part by a grant
      from Svenska kulturfonden via Stiftelsernas professorspool,
      Finland} \quad and\quad Bao Quoc Ta\footnotemark[1]\;\thanks{Research supported by the Finnish Doctoral Programme on Stochastics and Statistics}}
  \date{}
  \maketitle
\begin{center}
\it Dedicated to the memory of a dear friend and colleague\\ Professor Esko Valkeila
(1951-2012)
\end{center}

\vspace{0.5cm}

\begin{abstract}
The principle of smooth fit is probably the most used tool to find solutions to optimal stopping problems of 
one-dimensional diffusions. It is important, e.g., in financial mathematical applications to understand in which kind of models 
and problems smooth fit can fail. In this paper we connect - in case of one-dimensional 
diffusions - the validity of smooth fit and the differentiability of excessive functions. The basic
tool to derive the results is the representation theory of excessive
functions; in particular, the Riesz and Martin representations. It is seen that the
differentiability may not hold  in case the speed measure of the diffusion or the
representing measure of the excessive function has atoms. 

As an example, we study  optimal stopping of sticky
Brownian motion. It is known that the validity of the smooth fit in this case  depends on the value of the discounting parameter (when the other parameters are fixed). We decompose the size of the jump in the derivative of the value function into two factors. The first one is due to the atom of the representing measure and the second one due to the atom of the speed measure.
 \end{abstract}
\textbf{Key words:} { Riesz representation, Martin representation,
  Green function, optimal stopping, smooth
  fit, sticky Brownian motion.}\\
  \textbf{2010 mathematics subject classification:} {primary 60J60, 60G40, secondary 31C05. }
  
  \section{Introduction}
Let $X=(X_t)_{t\geq 0}$ be a one-dimensional 
diffusion process  in the sense of It\^o and McKean
\cite{itomckean74} 
living on an interval $I\subseteqq \R$ , i.e., $X$ is a time-homogeneous strong Markov
process with continuous sample paths.
As usual, the notations   $\PP_x$
and $\E_x$ are used  for the
probability measure and the expectation operator, respectively, associated with $X$ when initiated
from $x\in I$. The life time of $X$ is defined as $\zeta:=\inf\{t:
X_t\not\in I\}$ and we set $X_t=\Delta$ for $t\geq \zeta$, where
$\Delta$ is a fictitious state -- the so called cemetery state. 
Recall that a measurable function $f: I\cup{\{\Delta\}} \mapsto \R_+\cup\{\infty\}$ is
called $\alpha$-excessive, $\alpha\geq 0,$ if
for all $ x\in I$ the following two conditions hold:
\begin{align}
&
\label{excessive1}
\E_x (e^{-\alpha t}f(X_t))\leq f(x),\quad \forall t>0,\\
%&\nonumber \\
&
\label{excessive2}
\lim_{t\downarrow 0}\E_x (e^{-\alpha t}f(X_t))= f(x),
\end{align}
where, by convention, $f(\Delta)=0.$ An alternative and equivalent
definition is obtained by replacing (\ref{excessive1}) and
(\ref{excessive2}) by 
\begin{align}
&
\label{dynkin1}
\beta\,  \E_x \left(\int_0^\zeta {\rm e}^{-(\alpha+\beta)
  t}f(X_t)\,dt\right)\leq f(x),\quad \forall \beta >0,\\
&
\label{dynkin2}
\lim_{\beta\uparrow+\infty}\E_x \left(\int_0^\zeta {\rm e}^{-(\alpha+\beta) t}f(X_t)\,dt\right)=f(x).
\end{align} 
We refer to  
%The concept of excessive function was introduced by Hunt who initiated in a series of papers in
%the fifties the potential theory for general Markov processes (based on the
%potential theory of Brownian motion due to Doob). Later Dynkin gave an alternative characterization of excessive
%functions via resolvents:   a measurable function $f:
%I \mapsto \R_+$ is $\alpha$-excessive if and only if for all $t\geq 0$ and $x\in I$
%\begin{align}
%\label{dynkin}
%&\beta\,  \E_x \left(\int_0^\zeta {\rm e}^{-(\alpha+\beta) t}f(X_t)\,dt\right)\leq f(x),\\
%\nonumber
%&\lim_{\beta\uparrow+\infty}\E_x \left(\int_0^\zeta {\rm e}^{-(\alpha+\beta) t}f(X_t)\,dt\right)=f(x).
%\end{align}
Dynkin \cite{dynkin65} Vol. II for results on excessive functions in
general and in particular for one-dimensional diffusions. For excessive
functions in the framework of the potential theory of Markov processes,
see Blumenthal and Getoor \cite{blumenthalgetoor68} and Chung and Walsh
\cite{chungwalsh05}. % where also historical details
%and references. 
Excessive functions being descendants of superharmonic
functions have, hence, deep roots in the classical 
potential theory and constitute also fundamental concept in the theory of
Markov processes.  
% - a classical field of mathematics.
% In this paper we offer a short discussion on basic properties of
%excessive functions of one-dimensional diffusions. 

Our main motivation for the present study comes, however, 
from the theory of optimal stopping where excessive functions play a
crucial role. Indeed, given a continuous non-negative (reward) function $g$
the
optimal stopping problem with the underlying process $X$ is to find a
(value) function $V$ and an (optimal) stopping time $\tau^*$ such that 
\begin{equation}\label{osp}
V(x):=\sup_{\tau\in {\cal
    M}}\E_x(e^{-\alpha\tau}g(X_\tau))=\E_x(e^{-\alpha\tau^*}g(X_{\tau^*})),
\end{equation}
where $\cal M$ denotes the set of all stopping times with respect to the
filtration $({\cal F}_t)_{t\geq 0}$ generated by $X.$ The fundamental
result due to Snell and Dynkin (see Shiryayev \cite{shiryayev78} 
and Peskir and Shiryayev \cite{peskirshiryaev06} for details and references),
is that
$V$ is the smallest $\alpha$-excessive function dominating $g$ and an
optimal stopping time is given by
$$
\tau^*=\inf\{t\geq 0: X_t\in\Gamma\},
$$    
where  $\Gamma:=\{x: V(x)=g(x)\}$ is the so called stopping region. Therefore, 
a good knowledge of excessive functions is a key
to a deeper understanding of optimal stopping. 

Although our focus is on applications in optimal stopping we wish to
point out that excessive functions can also be used, e.g.,  to condition and/or to kill a process
in some particular desirable way. Such conditionings of the underlying
process are called
excessive transforms or Doob's $h$-transforms due to Doob's pioneering
work  \cite{doob57}. We refer also to McKean  \cite{mckean63},
Dynkin \cite{dynkin69_engl}, and  Meyer et
al. \cite{meyersmythewalsh71} for early seminal papers. The
theory of  $h$-transforms in a general setting is discussed in Chung and Walsh \cite{chungwalsh05} Chapter 11.  
Moreover, a fairly recent problem arising from financial mathematics is to construct
for a given process $X$   a
martingale having the same distribution as $X$ at a fixed time 
or at a random time, see, e.g.,  Cox et al. \cite{coxetal11},  Hirsch et al \cite{hirschetal11}, Ekstr\"om
et al. \cite{ekstrometal13} and Noble \cite{noble13} and references therein. In particular,
Klimmek \cite{klimmek12} exploits explicitly $h$-transforms to find
the solution of the problem for a random (exponential) time.

These and other applications in mind -- and also per se --  we offer in
this paper firstly a discussion on continuity and differentiablity properties of
excessive functions of one-dimensional diffusions and secondly
applications to optimal stopping with an example. Our approach
utilizes the Riesz and Martin representations  which are valid  in their strongest and most explicit
forms for
one-dimensional regular diffusions.

In the next section we give the Riesz and Martin representations
with needed prerequisities and also present some examples. 
An
immediate implication of the Riesz representation is then  the continuity of
excessive functions, see Proposition \ref{cont}. %h
The continuity is implicitly stated already in Salminen \cite{salminen85}. In  Dayanik and Karatzas
\citep{D-K} and in Peskir and Shiryayev  \cite{peskirshiryaev06} the 
continuity is proved for a special class of
excessive functions, that is, for value functions in optimal
stopping problems. Their proofs utilize the properties of the value
functions and the concavity, as in Dynkin and Yuschkevitch
\cite{dynkinyushkevich69}. The advantage of the present approach is
that it yields the result with full generality. We also shortly
list - for general interest - some other basic potential theoretical and related results for one-dimensional
regular diffusions. In particular, it is seen that all additive
functionals are continuous. This is also pointed out in \cite{borodinsalminen02}
p. 28 but with a slightly different explanation.

In the third section the differentiability properties of excessive
functions are investigated. The Riesz representation allows us to derive
conditions for differentiablity with respect to any increasing
continuous function $F,$ see Theorem \ref{exc-thm}.  This extends the result in  \cite{salminen85}
where differentiability with respect to the scale function is studied.
We also  represent the jump of the derivative of an excessive function
as the sum of two terms: the first one is induced by the
representing measure and the second one by the speed measure.

These results are then used in the fourth
section to
study the principle of smooth fit in optimal stopping of 
one-dimensional diffusions. Our contribution hereby is to demonstrate
-- using the results in Section 3  -- that
the proof of the condition for the smooth fit with respect to the
scale as presented in  \cite{salminen85} can be rewritten -- 
changing mainly only the notation -- to a proof of the condition for the smooth fit
in the ordinary sense as given in  Peskir
\cite{Pes} and  Samee \cite{Sam}, see also  \cite[p. 160]{peskirshiryaev06} (e.g. when studying the case where the scale function is not
differentiable at the stopping point).  We conclude by analyzing the smooth fit
property in an optimal stopping
problem where the underlying process is a sticky
Brownian motion. It is known, see Crocce and Mordecki \cite{croccemordecki12}, that if the optimal stopping
point is the sticky point then the smooth fit typically fails. Our
results enhance the understanding of this phenomenon by giving an
explicit form for the jump of the derivative of the value function in
this case.

\section{Riesz and Martin representations}

We start with by introducing more notation and recalling some basic
facts. Let $l\geq -\infty$ and $r\leq +\infty$ denote the left and the right, respectively,
end point of $I$ which is an interval of any kind. Recall that  $I$ is the state space of $X$.  The notations $m$ and $S$
are used for the speed measure and the scale function, respectively. Moreover, let $\cal G$ denote the generalized differential operator
associated with $X$ and $\tau_y$ the
first hitting time of $y\in I$, that is, 
$$
\tau_y:=\inf\{t: X_t=y\}.
$$
We assume that $X$ is regular (cf. Dynkin \cite{dynkin65} Vol. II p.121), that is, 
\begin{equation}
\label{regu}
 \PP_x(\tau_y<\infty)>0,\qquad  \forall x,y\in I
\end{equation}
in other words, no matter where $X$ starts there is a positive probability to
hit any point in $I.$ This means that a regular diffusions do not have
absorbing points and
exit points and entrance points are not included in $I.$ Moreover, a
consequence of the regularity is that there does not exist non-empty
polar sets (for this notion, see \cite[p. 79]{blumenthalgetoor68} ). 
\begin{remark}
\label{other_regu}
The above definition of regularity differs  from another often used definition in which (\ref{regu}) is
assumed to hold  for all $x\in (l,r)$ and $y\in I$ (see,  e.g., Revuz
and Yor \citep{R-Y} p. 300). According to this latter definition we
could have a regular diffusion with $I=[l,r]$ and  $l$ and $r$ absorbing. As demonstrated below
in Example \ref{absorbing} such a diffusion has discontinuous excessive
functions  -- the case we want to exclude.
\end{remark}

As showed in It\^o and McKean \cite[p. 124]{itomckean74}  the Laplace
transform of $\tau_y$ can be expressed for  $\alpha>0$ as
\begin{equation*}
\E_x(e^{-\alpha\tau_y})=
%\frac{G_r(x,y)}{G_r(y,y)}=
\left\{
\begin{array}{rl}
\displaystyle{\frac{\psi_\alpha(x)}{\psi_\alpha(y)}},\quad x\leq y,\\
\displaystyle{\frac{\varphi_\alpha(x)}{\varphi_\alpha(y)}},\quad x\geq y,
\end{array}\right. 
\end{equation*}
where $\psi_\alpha$ and $\varphi_\alpha$ are continuous, positive,  increasing and
decreasing, respectively, solutions of the generalized differential
equation  
\begin{equation}
\label{GEE}
{\cal G}u=\alpha u.
\end{equation}
Imposing appropriate boundary conditions 
determine $\psi_\alpha$ and $\varphi_\alpha$ uniquely up to a multiplicative constant. The Wronskian $\omega_{\alpha}$ - a constant - is defined as  
\begin{align*}
\omega_{\alpha}&:=\psi^{+}_{\alpha}(x)\varphi_\alpha(x)-\psi_\alpha(x)\varphi^{+}_{\alpha}(x)\\
&=\psi^{-}_{\alpha}(x)\varphi_\alpha(x)-\psi_\alpha(x)\varphi^{-}_{\alpha}(x),
\end{align*} 
where the superscripts $^+$ and $^-$ denote the right and left
derivatives with respect to the scale
function, i.e., for $u=\psi_\alpha$ or $\varphi_\alpha$ 
\begin{align*}
u^+(x):=\frac{d^+u}{dS}(x):=&\lim_{\delta\to 0+} \frac{u(x+\delta)-u(x)}{S(x+\delta)-S(x)},
\\
u^-(x):=\frac{d^-u}{dS}(x):=&\lim_{\delta\to 0+} \frac{u(x-\delta)-u(x)}{S(x-\delta)-S(x)},
\end{align*}
cf. (\ref{derivatives1}) and (\ref{derivatives2}) below and recall that the scale function of a diffusion is continuous.    
It is well-known (see \cite[p. 150]{itomckean74} ) that 
\begin{equation}\label{greenkernel}
G_\alpha(x,y):=
\left\{
\begin{array}{rl}
w^{-1}_\alpha\psi_\alpha(x)\varphi_\alpha(y),\quad x\leq y,\\
w^{-1}_\alpha\psi_\alpha(y)\varphi_\alpha(x),\quad x\geq y,
\end{array}\right.
\end{equation}
serves as a resolvent kernel (also called the Green function) of $X,$ i.e., for any Borel subset $A$ of $I$ 
$$
\E_x\left(\int_0^\zeta {\rm e}^{-\alpha t}{\bf 1}_A(X_t)\, dt\right) = \int_A G_\alpha(x,y)\,m(dy),
$$ 

Using Theorem 12.4  in Dynkin
\cite{dynkin65} it is fairly straightforward to check
that for every fixed $y$ the function $x\mapsto G_\alpha(x,y)$ is
$\alpha$-excessive (see \cite[p. 89]{salminen85} ). 
Since $(x,y)\mapsto G_\alpha(x,y)$ is symmetric it follows that
$X$ is self-dual with respect to the speed measure, that is
\begin{equation}\label{dual}
<f, G_\alpha g>_m\,=\,< G_\alpha f,g>_m,
\end{equation}
where  
$$
< f,g>_m:= \int_{I}  f(x) g(x)m(dx),\qquad 
 G_\alpha f(x):=\int_{I}  G_\alpha(x,y)f(y)m(dy),
$$
with $f$ and $g$ bounded Borel measurable functions satisfying appropriate
integrability condtions. For the concept of duality and related
topics,  see Kunita and Watanabe \cite{KW}, Blumenthal and Getoor
\cite{blumenthalgetoor68} and Chung and Walsh
\cite{chungwalsh05}. We wish to apply  the 
Riesz representation theorem, see \cite[p. 272]{blumenthalgetoor68}, 
and remark that the assumptions for its validity as presented in 
 \cite{blumenthalgetoor68}
Chapter VI (see also \cite[Theorem 2 p. 505]{KW}) are satisfied. An
important assumption is that $X$ has a dual process which is standard  in the sense
of the definition in  ibid. p. 45. Clearly, $X$ is standard and since
$X$ is self dual the needed assumption is fulfilled. Notice also that  (2.1) and (2.2) in  \cite[p. 265-266]{blumenthalgetoor68}
hold.

\begin{thm}
\label{RDEC}
(The Riesz representation) Let $\alpha>0$ and $u$ an 
  $\alpha$-excessive function of the regular one-dimensional diffusion $X$. It is assumed that $u$ is locally integrable with respect
  to $m.$ Then there exist
an  $\alpha$-harmonic function $h_\alpha$ and a Radon measure $\sigma_u$
on $I$ such that $u$ can be represented uniquely as 
\begin{equation}
\label{riesz} 
u(x)=\int_{I} G_\alpha(x,y)\, \sigma_u(dy) + h_\alpha(x).
\end{equation}
\end{thm}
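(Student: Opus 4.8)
The plan is to derive the representation from the abstract Riesz decomposition of excessive functions for a Markov process with a standard dual, and then to make its two constituents explicit using the structure special to one-dimensional regular diffusions: self-duality with respect to $m$ and the explicit Green kernel (\ref{greenkernel}). First I would invoke the Riesz decomposition theorem of \cite[Ch.~VI]{blumenthalgetoor68}, whose hypotheses have been verified in the discussion preceding the statement (standardness of $X$, self-duality, and the conditions (2.1)--(2.2) of \cite[p.~265-266]{blumenthalgetoor68}), to obtain the unique decomposition $u = p + h_\alpha$, where $h_\alpha$ is the greatest $\alpha$-harmonic minorant of $u$ and $p$ is an $\alpha$-potential, i.e.\ an $\alpha$-excessive function whose greatest $\alpha$-harmonic minorant is identically $0$. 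Since $u$ is locally $m$-integrable and $0 \le h_\alpha \le u$, both $h_\alpha$ and $p$ inherit local $m$-integrability.

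The second step is to realise the potential $p$ as a Green potential of a measure. Because $m$ is a reference measure for $X$ and $X$ is self-dual with respect to $m$, every $\alpha$-potential is generated by a non-negative measure; by the symmetry of $(x,y)\mapsto G_\alpha(x,y)$ together with $\E_x\left(\int_0^\zeta e^{-\alpha t}\mathbf{1}_A(X_t)\,dt\right) = \int_A G_\alpha(x,y)\,m(dy)$, the pertinent potential density relative to $m$ is exactly $G_\alpha(x,y)$, so $p(x) = \int_I G_\alpha(x,y)\,\sigma_u(dy)$ for some non-negative measure $\sigma_u$ on $I$. This is where the hypothesis of local $m$-integrability is used: it forces $\sigma_u$ to be Radon, i.e.\ locally finite. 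Indeed, for a compact interval $[a,b]\subset I$ the kernel $G_\alpha$ is bounded below by a positive constant on $[a,b]\times[a,b]$ (being strictly positive and continuous) while $m([a,b])>0$, so integrating $p$ against $m$ over $[a,b]$ and applying Tonelli bounds $\sigma_u([a,b])$ by a multiple of $\int_{[a,b]} p\,dm<\infty$. Together with the previous step this yields (\ref{riesz}); as a consistency check one may recall that for $\alpha>0$ the $\alpha$-harmonic functions of $X$ are the non-negative combinations $a\psi_\alpha+b\varphi_\alpha$, so $h_\alpha$ is of this form.

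If one wishes to bypass the abstract machinery, $\sigma_u$ can be built directly by resolvent approximation. Putting $u^{(\beta)} := \beta G_{\alpha+\beta} u$, the characterisation (\ref{dynkin1})--(\ref{dynkin2}) gives $u^{(\beta)} \uparrow u$ as $\beta\to\infty$, while the resolvent equation yields $u^{(\beta)} = G_\alpha \mu_\beta$ with $\mu_\beta(dy) = \beta\,(u(y)-u^{(\beta)}(y))\,m(dy)$, a measure with an $m$-density. Applying this to $p$ in place of $u$, local $m$-integrability makes $\mu_\beta([a,b])$ bounded uniformly in $\beta$ on each compact interval, so a diagonal extraction gives a subsequence converging vaguely to a non-negative measure $\sigma_u$ on $I$, and one verifies $G_\alpha\mu_{\beta_k}\to G_\alpha\sigma_u$ by splitting the integral over $[a,b]$ and its complement and letting $[a,b]\uparrow I$. \textbf{The main obstacle} is precisely this limit: one must exclude escape of the mass of $\mu_{\beta_k}$ towards the boundary $\{l,r\}$, since any such escape would add a non-trivial $\alpha$-harmonic minorant to the limit, contradicting that $p$ is a potential. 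This is exactly the place where it matters that the harmonic part was already stripped off in the first step.

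It remains to prove uniqueness. Let $u = G_\alpha\sigma_1 + h_1 = G_\alpha\sigma_2 + h_2$ with $\sigma_i$ Radon and $h_i$ $\alpha$-harmonic. Each $G_\alpha\sigma_i$, being a (finite) Green potential of a measure for the transient $\alpha$-killed process, is an $\alpha$-potential, so $h_1$ and $h_2$ must both equal the greatest $\alpha$-harmonic minorant of $u$; hence $h_1 = h_2$ and $G_\alpha\sigma_1 = G_\alpha\sigma_2$. Injectivity of $\sigma\mapsto G_\alpha\sigma$ then follows from the self-duality (\ref{dual}): for every non-negative $f\in C_c(I)$, symmetry of $G_\alpha$ and Tonelli give $\langle G_\alpha\sigma_1, f\rangle_m = \langle \sigma_1, G_\alpha f\rangle$ and likewise for $\sigma_2$, so $\langle \sigma_1, G_\alpha f\rangle = \langle \sigma_2, G_\alpha f\rangle$; since the continuous functions $G_\alpha f$ ($f\in C_c(I)$, $f\ge 0$) separate the Radon measures on $I$ — equivalently, applying the generalised operator $\mathcal{G}$ to the identity $G_\alpha(\sigma_1-\sigma_2)=0$ returns $\sigma_1-\sigma_2=0$ — we conclude $\sigma_1=\sigma_2$. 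The only genuinely delicate points in this scheme are the Radon property of $\sigma_u$ and, on the self-contained route, the exclusion of loss of mass to the boundary.
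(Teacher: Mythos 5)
Your proposal is correct and takes essentially the same route as the paper, which offers no independent proof but, exactly as in your first paragraph, verifies the hypotheses (standardness, self-duality with respect to $m$, conditions (2.1)--(2.2)) and invokes the Riesz representation theorem of \cite{blumenthalgetoor68} (see also \cite{KW}). The extra details you supply -- local $m$-integrability forcing $\sigma_u$ to be Radon, uniqueness via self-duality, and the resolvent-approximation alternative -- are sound elaborations of what the paper leaves to the cited references.
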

\noindent
\begin{remark} {\bf (i)}  The Riesz representation holds also for
  $\alpha=0$ when $X$ is transient. The Green function when
   $\alpha=0$ has a
  similar structure as in case $\alpha>0$ but now the corresponding 
functions  $\varphi_0$ and $\psi_0$ express the hitting probabilities
instead of the Laplace transforms of the hitting distributions. In
Section \ref{DIFF} we discuss shortly the special case in which the
diffusion is not killed inside the state space $I.$ \hfill\break\hfill
 {\bf (ii)} The assumption on the local integrability is
  superfluous in case of one-dimensional diffusions. Indeed, assuming that the $\alpha$-excessive function $u\not\equiv +\infty$ choose a point $x$ such
  that $u(x)<+\infty.$ From (\ref{dynkin1}) we have 
\begin{equation}
\beta \int_I G_{\alpha+\beta}(x,y)u(y)m(dy)=
\beta\,  \E_x \left(\int_0^\zeta {\rm e}^{-(\alpha+\beta)
  t}u(X_t)\,dt\right)\leq u(x).
\end{equation}
The local integrability of $u$ follows now easily from the explicit
form of  $G_{\alpha+\beta}$  and the continuity of $\psi_{\alpha+\beta}$
and  $\varphi_{\alpha+\beta}$.  \hfill\break\hfill
% {\bf (iii)} The uniqueness of  the representation (and $\sigma_u$) follows from the fact
%that $X$ is a self dual standard process (see, e.g., \cite[Proposition
%7.11 p. 503]{KW}).  \hfill\break\hfill
 {\bf (iii)} The $\alpha$-harmonicity of $h_\alpha$ means that 
for all compact subset $A$ of $I$ it holds  
\begin{equation}
\label{harm} 
h_\alpha(x)=\E_x\left( {\rm e}^{-\alpha\tau_A}\,h_\alpha(X_{\tau_A})\right),
\end{equation}
 where 
$$ 
\tau_A:=\inf\{t\,:\, X_t\not\in A\}.
$$
\end{remark}

The Riesz representation does not give much information on the harmonic function associated with a given excessive function. 
However, in the Martin boundary theory an integral representation is derived also for the harmonic functions. We refer to
\cite{KW} and \cite{chungwalsh05} for the general theory of Martin boundaries for Markov processes.
In the next theorem we state the Martin representation for
one-dimensional regular diffusions and, moreover, present the explicit form 
of the representing measure extracted from \cite{salminen85}. We refer also to  \cite{alvarezsalminen} and \cite{christensenirle11} for applications of the Martin boundary 
theory in optimal stopping.  

\begin{thm}
\label{MDEC}
(The Martin representation) Let $u$ be an 
  $\alpha$-excessive function of the one-dimensional diffusion $X$ and $x_o\in I$ a point such that
$u(x_o)=1.$ Then $u$ can be represented uniquely as 
\begin{equation}\label{martin-repre}
u(x)=\int_{(l,r)}\frac{G_{\alpha}(x,y)}{G_{\alpha}(x_0,y)}\nu^o_{u}(dy)
+\frac{\varphi_\alpha(x)}{\varphi_\alpha(x_o)}\,\nu^o_{u}(\{l\})+\frac{\psi_\alpha(x)}{\psi_\alpha(x_o)}\,\nu^o_{u}(\{r\}),
\end{equation}
where  $\nu^o_{u}$ is a probability measure on $[l,r]$ characterized via 
\begin{align}
\label{10}
\nu^o_{u}((x,r])&=\frac{\psi_\alpha(x_o)}{\omega_{\alpha}}
\left(\varphi_\alpha(x)\,u^+(x)-u(x)\,\varphi^+_\alpha(x)\right),\qquad x\geq x_o,\\
\label{11}
\nu^o_{u}([l,x))&=\frac{\varphi_\alpha(x_o)}{\omega_{\alpha}}
\left(u(x)\,\psi^-_\alpha(x)-\psi_\alpha(x)\,u^-(x)\right),\qquad x\leq x_o.
\end{align}
Conversely, given a probability measure $\mu$ on $[l,r]$ and  $x_o\in
I$ then the right hand side of (\ref{martin-repre}) when putting
$\nu^o_{u}=\mu$ defines an $\alpha$-excessive function.
\end{thm}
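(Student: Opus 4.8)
\noindent\emph{Proof proposal.} The plan is to derive the statement from the Riesz representation (Theorem~\ref{RDEC}) together with the structure of the $\alpha$-harmonic functions of a regular one-dimensional diffusion. First I would recall that every $\alpha$-harmonic function $h_\alpha$ of $X$ is of the form $h_\alpha=a\,\psi_\alpha+b\,\varphi_\alpha$ with constants $a,b\ge 0$: that $h_\alpha$ is \emph{some} linear combination of $\psi_\alpha$ and $\varphi_\alpha$ follows by applying the defining identity (\ref{harm}) with $A=[c,d]$, $l<c<x<d<r$, and inserting the classical two-sided exit formulas for $\E_x(\e^{-\alpha\tau_c};\tau_c<\tau_d)$ and $\E_x(\e^{-\alpha\tau_d};\tau_d<\tau_c)$ in terms of $\psi_\alpha$ and $\varphi_\alpha$ (see \cite{itomckean74}); comparing two choices of $(c,d)$ shows the coefficients do not depend on $(c,d)$, and their nonnegativity follows from $h_\alpha\ge0$ and the boundary behaviour of $\psi_\alpha,\varphi_\alpha$ (cf.\ \cite{salminen85}). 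Equivalently, this identifies the Martin boundary of $X$ with the two extremal rays spanned by $\varphi_\alpha$ and $\psi_\alpha$. Consequently, by (\ref{riesz}), any $\alpha$-excessive $u$ with $u(x_o)=1$ satisfies
\begin{equation}\label{mrep-aux1}
u(x)=\int_I G_\alpha(x,y)\,\sigma_u(dy)+a\,\psi_\alpha(x)+b\,\varphi_\alpha(x).
\end{equation}

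Next I would pass from the Green kernel to the Martin kernel by reweighting the representing measure. Define a Borel measure $\nu^o_u$ on $[l,r]$ by $\nu^o_u(dy):=G_\alpha(x_o,y)\,\sigma_u(dy)$ for $y\in(l,r)$, $\nu^o_u(\{l\}):=b\,\varphi_\alpha(x_o)$ and $\nu^o_u(\{r\}):=a\,\psi_\alpha(x_o)$; as $G_\alpha(x_o,\cdot)>0$ this is a one-to-one correspondence between $\nu^o_u$ and the triple $(\sigma_u,a,b)$. Substituting into (\ref{mrep-aux1}) gives (\ref{martin-repre}) at once, since $a\,\psi_\alpha(x)=\psi_\alpha(x)\,\psi_\alpha(x_o)^{-1}\,\nu^o_u(\{r\})$ and likewise for the $\varphi_\alpha$-term. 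Evaluating (\ref{martin-repre}) at $x=x_o$ yields $1=u(x_o)=\nu^o_u((l,r))+\nu^o_u(\{l\})+\nu^o_u(\{r\})=\nu^o_u([l,r])$, so $\nu^o_u$ is a probability measure; uniqueness follows from the uniqueness of $(\sigma_u,h_\alpha)$ in Theorem~\ref{RDEC} and of the decomposition $h_\alpha=a\psi_\alpha+b\varphi_\alpha$.

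For the explicit formulas (\ref{10})--(\ref{11}) I would differentiate (\ref{mrep-aux1}) with respect to the scale function $S$. Fixing $x\ge x_o$ and forming $\varphi_\alpha(x)\,u^+(x)-u(x)\,\varphi^+_\alpha(x)$, I substitute (\ref{mrep-aux1}), interchange the one-sided $S$-differentiation with the integral against $\sigma_u$, and simplify: the $\varphi_\alpha$-term cancels, the harmonic part contributes $a\,\omega_\alpha$ through the Wronskian, the part of the integral over $\{y\le x\}$ contributes nothing, while the part over $\{y>x\}$ contributes the integrand $\varphi_\alpha(y)$ -- again by the Wronskian identity built into (\ref{greenkernel}). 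Hence
\begin{equation}\label{mrep-aux2}
\varphi_\alpha(x)\,u^+(x)-u(x)\,\varphi^+_\alpha(x)=\int_{(x,r)}\varphi_\alpha(y)\,\sigma_u(dy)+a\,\omega_\alpha .
\end{equation}
Since $G_\alpha(x_o,y)=\omega_\alpha^{-1}\psi_\alpha(x_o)\varphi_\alpha(y)$ for $y>x\ge x_o$, the right-hand side of (\ref{mrep-aux2}) equals $\omega_\alpha\,\psi_\alpha(x_o)^{-1}\bigl(\nu^o_u((x,r))+\nu^o_u(\{r\})\bigr)=\omega_\alpha\,\psi_\alpha(x_o)^{-1}\,\nu^o_u((x,r])$, which rearranges to (\ref{10}); the computation behind (\ref{11}) for $x\le x_o$ is symmetric, with $\psi_\alpha$ and the left $S$-derivative replacing $\varphi_\alpha$ and the right one. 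I expect this step to be the main obstacle: one must rigorously justify the interchange of the one-sided $S$-derivatives with the integral against $\sigma_u$, and carefully keep track of the two branches of $G_\alpha$ and of possible atoms of the speed measure $m$ at the point $y=x$, where $\psi_\alpha$ and $\varphi_\alpha$ may have unequal left and right $S$-derivatives -- it is precisely the use of the \emph{right} derivative $u^+$ that makes the half-open interval $(x,r]$, rather than $[x,r]$, appear.

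Finally, for the converse, let $\mu$ be a probability measure on $[l,r]$ and let $v(x)$ denote the right-hand side of (\ref{martin-repre}) with $\nu^o_u$ replaced by $\mu$. Splitting (\ref{greenkernel}) according to the position of $y$ relative to $x$ and $x_o$ shows $G_\alpha(x,y)/G_\alpha(x_o,y)\le\max\{\psi_\alpha(x)/\psi_\alpha(x_o),\,\varphi_\alpha(x)/\varphi_\alpha(x_o)\}$ for all $y\in(l,r)$, so $v$ is finite on $I$. Each kernel $x\mapsto G_\alpha(x,y)/G_\alpha(x_o,y)$ is $\alpha$-excessive, being a positive multiple of the $\alpha$-excessive function $x\mapsto G_\alpha(x,y)$ (as noted before Theorem~\ref{RDEC}), and $\psi_\alpha,\varphi_\alpha$ are $\alpha$-harmonic, hence $\alpha$-excessive; therefore (\ref{excessive1}) for $v$ follows from the Fubini--Tonelli theorem, and (\ref{excessive2}) for $v$ follows by dominated convergence, a $\mu$-integrable dominating function being $y\mapsto G_\alpha(x,y)/G_\alpha(x_o,y)$ together with the two boundary terms. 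Thus the right-hand side of (\ref{martin-repre}) is $\alpha$-excessive, which completes the proof.
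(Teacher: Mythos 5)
Your argument is essentially correct, but it takes a genuinely different route from the paper: the paper does not prove Theorem \ref{MDEC} at all, it quotes the Martin representation from the general theory (\cite{KW}, \cite{chungwalsh05}) and takes the explicit formulas (\ref{10})--(\ref{11}) from \cite{salminen85}, whereas you derive everything directly from the Riesz representation of Theorem \ref{RDEC}. Note that this reverses the paper's logic: the decomposition $h_\alpha=a\,\psi_\alpha+b\,\varphi_\alpha$ with $a,b\ge 0$ is obtained in the paper as a \emph{consequence} of the Martin representation (Corollary 2.4), so you must -- as you do -- prove it independently via the two-sided exit formulas; be aware, though, that nonnegativity of the coefficients does not always follow from positivity and boundary behaviour alone (e.g.\ when an endpoint is regular reflecting and belongs to $I$, so that $\varphi_\alpha(r-)>0$ or $\psi_\alpha(l+)>0$); in such cases you need the harmonicity condition (\ref{harm}) for compact sets containing that endpoint, which forces the coefficient of the non-harmonic member of the pair to vanish. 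Your Wronskian computation for (\ref{10})--(\ref{11}) is correct, and the interchange of the one-sided $S$-derivatives with the integral against $\sigma_u$ that you flag as the main obstacle is exactly the estimate $R(z,\delta)\le J(z,\delta)\le 0$ with $\sigma_u((z,z+\delta])\to 0$ used in the proofs of Theorems \ref{exc-thm} and \ref{mass} (yielding (\ref{e11})--(\ref{e21})); those arguments rely only on the Riesz representation, so borrowing them involves no circularity. Two small repairs: if an endpoint belongs to $I$, a possible $\sigma_u$-atom there must be routed into the corresponding boundary atom of $\nu^o_u$ (consistently, since $G_\alpha(\cdot,l)$ is proportional to $\varphi_\alpha$), and in the converse direction dominated convergence is not really needed -- Tonelli, excessivity of each kernel and Fatou, together with the upper bound from (\ref{excessive1}), already give (\ref{excessive2}). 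What the two approaches buy: the paper's citation route is short and places the result in the general Martin boundary framework, while your route makes the one-dimensional case self-contained and elementary, at the price of carrying out the harmonic-function structure lemma and the differentiation-under-the-integral step explicitly.
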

\begin{remark} The expression on the right hand side of
  (\ref{martin-repre}) is well defined since $\varphi_\alpha$ and
  $\psi_\alpha$ are positive on $I.$ Notice that the probability
  measure $\nu^o_{u}$ is defined on the closure of $I;$ also in case
  $l=-\infty$ and/or $r=+\infty.$ In fact, $[l,r]$ is the
  so called Martin
  compactification of $I.$

\end{remark}

Combining (\ref{martin-repre}) with (\ref{riesz}) yields a
characterization of the $\alpha$-excessive functions in the Riesz
representation. This together with other relationships between the two
representations are discussed in the next  
\begin{corollary}
Let $u$ and $h_\alpha$ be as in Theorem \ref{RDEC}. Then there exist
$c_1\geq 0$ and $c_2\geq 0$ such that 
$h_\alpha =c_1\, \varphi_\alpha+ c_2\,\psi_\alpha.$ The Riesz and the
Martin representing measures of $u$ are connected via the identity
\begin{equation}\label{martin-riesz}
\sigma_{u}(A)=\int_A\frac{1}{G_{\alpha}(x_0, y)}\,\nu^{o}_{u}(dy),
\end{equation}
where $A$ is a Borel subset of $I.$ Moreover, 
$u$ has the unique representation 
\begin{equation}
\label{riesz2} 
u(x)=\int_{(l,r)} G_\alpha(x,y)\, \sigma_u(dy) + \hat h_\alpha(x),
\end{equation}
where 
$$
\hat h_\alpha(x):=c'_1\, \varphi_\alpha(x)+ c'_2\,\psi_\alpha(x)
$$
with $c_1'\geq 0$ and $c'_2 \geq 0.$
\end{corollary}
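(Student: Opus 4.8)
The plan is to deduce both displayed relations by confronting the Martin representation of $u$ (Theorem~\ref{MDEC}) with its Riesz representation (Theorem~\ref{RDEC}) and letting the uniqueness clauses of the two theorems do the work; no new estimate is required. First reduce to the normalized case. If $u\not\equiv0$ there is a point $x_o$ with $0<u(x_o)<\infty$: indeed $u$ is $m$-a.e.\ finite by local integrability, and if $u(x_o)=0$ then (\ref{dynkin1}) together with the strict positivity of $G_{\alpha+\beta}(x_o,\cdot)$ on $I$ gives $u=0$ $m$-a.e., hence $u\equiv0$ by the continuity of excessive functions. Replacing $u$ by $u/u(x_o)$ only rescales $\sigma_u$ and $h_\alpha$, so we may and do assume $u(x_o)=1$.

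Next rewrite (\ref{martin-repre}). Setting
$$
\mu(dy):=\frac{1}{G_\alpha(x_o,y)}\,\nu^o_u(dy)\ \text{on}\ (l,r),\qquad
c_1:=\frac{\nu^o_u(\{l\})}{\varphi_\alpha(x_o)}\ge0,\qquad
c_2:=\frac{\nu^o_u(\{r\})}{\psi_\alpha(x_o)}\ge0,
$$
the Martin representation becomes
$$
u(x)=\int_{(l,r)}G_\alpha(x,y)\,\mu(dy)+c_1\,\varphi_\alpha(x)+c_2\,\psi_\alpha(x).
$$
Two routine checks give this the shape of (\ref{riesz}). First, $\mu$ is a Radon measure: for compact $K\subset(l,r)$ one has $\mu(K)\le\nu^o_u(K)/\min_K G_\alpha(x_o,\cdot)<\infty$ since $y\mapsto G_\alpha(x_o,y)$ is continuous and strictly positive on $(l,r)$, and $\mu$ extends to a Radon measure on $I$ by putting $\mu(\{l\})=\mu(\{r\})=0$ -- it is locally finite also at an endpoint belonging to $I$, such an endpoint being then a (sticky-)reflecting boundary, cf.\ Remark~\ref{other_regu}, so that $\psi_\alpha$ and $\varphi_\alpha$ are bounded away from $0$ near it. Second, $c_1\varphi_\alpha+c_2\psi_\alpha$ is a nonnegative $\alpha$-harmonic function, because $\varphi_\alpha$ and $\psi_\alpha$ are themselves $\alpha$-harmonic: this is classical and follows from the two-sided exit identities contained in the hitting-time Laplace transforms recalled before (\ref{greenkernel}), by evaluating $\E_x\big(e^{-\alpha\tau_A}v(X_{\tau_A})\big)$ for a compact interval $A\ni x$ and $v\in\{\varphi_\alpha,\psi_\alpha\}$ and recovering $v(x)$, which is precisely property (\ref{harm}).

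Now the displayed decomposition has exactly the form of (\ref{riesz}), so by the uniqueness in Theorem~\ref{RDEC} it must be the Riesz decomposition of $u$. Hence $\sigma_u=\mu$, which is the identity (\ref{martin-riesz}), and $h_\alpha=c_1\varphi_\alpha+c_2\psi_\alpha$ with $c_1,c_2\ge0$. Since $\sigma_u$ is then carried by $(l,r)$, the integral in (\ref{riesz}) may be taken over $(l,r)$, and substituting the formula just found for $h_\alpha$ gives (\ref{riesz2}) with $c_1'=c_1$ and $c_2'=c_2$. Its uniqueness follows because any representation of that form is again of the form (\ref{riesz}), its $\alpha$-harmonic part being $c_1'\varphi_\alpha+c_2'\psi_\alpha$, so Theorem~\ref{RDEC} determines $\sigma_u$ and $c_1'\varphi_\alpha+c_2'\psi_\alpha$, and then $c_1',c_2'$ are determined by the linear independence of $\varphi_\alpha$ and $\psi_\alpha$ (the Wronskian $\omega_\alpha$ is non-zero).

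The point that requires genuine care -- and where I would be most careful when writing this out -- is the treatment of the endpoints $l$ and $r$: one must verify that the atoms $\nu^o_u(\{l\})$ and $\nu^o_u(\{r\})$ are exactly what is absorbed into $h_\alpha$, that $\mu$ is indeed locally finite near an endpoint that happens to lie in $I$, and that the uniqueness in Theorem~\ref{RDEC} is used in its ``pure potential plus $\alpha$-harmonic'' form, so that in particular $\sigma_u$ does not charge such endpoints. The rest is bookkeeping.
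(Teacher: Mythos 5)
Your overall strategy -- rewrite the Martin representation \eqref{martin-repre} in the shape of the Riesz decomposition \eqref{riesz} and let the uniqueness clause of Theorem \ref{RDEC} identify the pieces -- is exactly how the paper intends the corollary to be obtained (it is stated as a direct consequence of combining the two representations). The gap is in your second ``routine check'': the blanket claim that $\varphi_\alpha$ and $\psi_\alpha$ are $\alpha$-harmonic is false, and the paper's own example right after the corollary (Brownian motion on $I=[0,1)$ reflected at $0$, killed at $1$) is a counterexample: there $\psi_\alpha$ is $\alpha$-harmonic but $\varphi_\alpha$ is not -- condition \eqref{harm} fails for $A=[0,b]$ because exit from $A$ can only occur through $b$ -- and the Riesz measure of $\varphi_\alpha$ is a multiple of the Dirac measure at $0\in I$. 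Your two-sided exit computation establishes \eqref{harm} only for compacts $[a,b]$ with $l<a$ and $b<r$; it breaks down precisely when an endpoint of $I$ belongs to $I$ (a reflecting or sticky-reflecting boundary), which the paper's notion of regularity allows. In that case your decomposition $u=\int_{(l,r)}G_\alpha\,d\mu+c_1\varphi_\alpha+c_2\psi_\alpha$ is not of the form ``potential plus $\alpha$-harmonic'', so the uniqueness in Theorem \ref{RDEC} cannot be invoked for it, and the conclusions you draw from it are then wrong: you force $\sigma_u(\{l\})=0$ and declare $c_1\varphi_\alpha$ harmonic, whereas for $u=\varphi_\alpha/\varphi_\alpha(x_o)$ in that example the correct decomposition has $h_\alpha\equiv 0$ and $\sigma_u=G_\alpha(x_o,0)^{-1}\varepsilon_{\{0\}}$, which is exactly what \eqref{martin-riesz} asserts when the Borel set $A\subset I$ is allowed to contain the endpoint. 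So you correctly flagged the endpoints as the delicate issue, but your resolution (``$\sigma_u$ does not charge such endpoints'') is the opposite of what actually happens.

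The repair is the case split that the corollary's formulation already hints at. Keep your $\mu$, $c_1$, $c_2$, but at each endpoint distinguish: if $l\notin I$, the term $c_1\varphi_\alpha$ is genuinely $\alpha$-harmonic; if $l\in I$, then $G_\alpha(x,l)=w_\alpha^{-1}\psi_\alpha(l)\varphi_\alpha(x)$, so $c_1\varphi_\alpha(x)=\nu^o_u(\{l\})\,G_\alpha(x,l)/G_\alpha(x_o,l)$ is the potential of a point mass at $l$ of precisely the size prescribed by \eqref{martin-riesz}, and this mass must be added to $\mu$ rather than to the harmonic part (similarly at $r$). With this bookkeeping the uniqueness argument goes through and yields $h_\alpha=c_1\varphi_\alpha+c_2\psi_\alpha$ (with the coefficient set to zero at any endpoint lying in $I$), the identity \eqref{martin-riesz} on all Borel subsets of $I$ including such endpoints, and \eqref{riesz2}, where the integral is over the open interval $(l,r)$ so that any endpoint mass of $\sigma_u$ is reabsorbed into $\hat h_\alpha$ -- this is why the constants $c_1',c_2'$ there may differ from $c_1,c_2$. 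Your final uniqueness argument for \eqref{riesz2} via the linear independence of $\varphi_\alpha$ and $\psi_\alpha$ is fine as it stands.
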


Next example highlights the difference of the Riesz and Martin
representations via the fact that  $\psi_\alpha$ and/or
$\varphi_\alpha$ could be potentials, that is, not $\alpha$-harmonic.

\begin{example} Let $X$ be a Brownian motion reflected at 0 and killed
  at 1. Hence, $I=[0,1)$ and it is readily checked that we may take  
$$
\varphi_\alpha(x)={\rm sh}((1-x)\sqrt{2\alpha})
\quad {\rm and}\quad
\psi_\alpha(x)={\rm ch}(x\sqrt{2\alpha}).
$$
Notice that $\varphi_\alpha(1)=0$ and $\psi'_\alpha(0+)=0$ which, in fact, are
the appropriate boundary conditions to characterize $\varphi_\alpha$
and $\psi_\alpha,$ respectively. For this process, $\psi_\alpha$ is
$\alpha$-harmonic but $\varphi_\alpha$ is not. Standard computations
show that both  $\psi_\alpha$ and $\varphi_\alpha$ satisfy the
$\alpha$-harmonicity condition  (\ref{harm}) for intervals of the form 
$[a,b],\ 0<a<b<1.$ However, when $a=0$ the condition fails for
$\varphi_\alpha.$ Indeed, putting $A=[0,b],\ 0<b<1$ we have  
\begin{align*}
\E_x(e^{-\alpha\tau_{A}}\varphi_{\alpha}(X_{\tau_A}))&=\E_x(e^{-\alpha\tau_{b}}\varphi_{\alpha}(X_{\tau_b}))\\
&=\varphi_{\alpha}(a)\E_x(e^{-\alpha\tau_{a}})\\
&=\varphi_{\alpha}(b)\frac{\psi_{\alpha}(x)}{\psi_{\alpha}(b)}\neq \varphi_{\alpha}(x).
\end{align*}
Consequently, the Riesz representation of $\varphi_{\alpha}$ does not
have the $\alpha$- harmonic part and, hence, 
\begin{align*}
\varphi_{\alpha}(x)&=\int_{[0,1)}G_{\alpha}(x,y)\sigma_\varphi(dy).
\end{align*}
It follows by the uniqueness of the representing measure that $\sigma_\varphi$ is a multiple of the Dirac measure at 0.
\end{example}

Next we prove the continuity of excessive functions which is an
important stepping stone to the differentiability studied in Section
3.    

\begin{proposition}
\label{cont}
For a one-dimensional regular diffusion all $\alpha$-excessive
functions are continuous.
\end{proposition}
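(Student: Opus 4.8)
The plan is to reduce the statement to the continuity of the Green function $x\mapsto G_\alpha(x,y)$, which in turn follows from the continuity of $\psi_\alpha$ and $\varphi_\alpha$ built into the construction via It\^o--McKean. First I would invoke Theorem~\ref{RDEC}: since (by Remark (ii) following Theorem~\ref{RDEC}) every $\alpha$-excessive function $u\not\equiv+\infty$ is automatically locally $m$-integrable, we have the representation
$$
u(x)=\int_I G_\alpha(x,y)\,\sigma_u(dy)+h_\alpha(x),
$$
with $\sigma_u$ a Radon measure and $h_\alpha$ $\alpha$-harmonic. By the Corollary, $h_\alpha=c_1\varphi_\alpha+c_2\psi_\alpha$, which is continuous because $\psi_\alpha$ and $\varphi_\alpha$ are continuous solutions of~(\ref{GEE}). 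So it suffices to prove that the potential part $p(x):=\int_I G_\alpha(x,y)\,\sigma_u(dy)$ is continuous on $I$.

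For the potential part I would argue by splitting the integral at the running point. Fix $x_1\in I$ and pick a compact neighbourhood $[a,b]\subset I$ of $x_1$. Write $p(x)=\int_{[a,b]}G_\alpha(x,y)\,\sigma_u(dy)+\int_{I\setminus[a,b]}G_\alpha(x,y)\,\sigma_u(dy)$. For the second (far) integral, when $x$ ranges over a small neighbourhood of $x_1$ the kernel $(x,y)\mapsto G_\alpha(x,y)$ is jointly continuous and bounded on that neighbourhood times $(I\setminus[a,b])$ — indeed, using the explicit form~(\ref{greenkernel}), on $\{x\le y\}$ it equals $w_\alpha^{-1}\psi_\alpha(x)\varphi_\alpha(y)$ and on $\{x\ge y\}$ it equals $w_\alpha^{-1}\psi_\alpha(y)\varphi_\alpha(x)$, each a product of continuous monotone functions, so $G_\alpha(x,y)\le w_\alpha^{-1}\psi_\alpha(x)\varphi_\alpha(x)$ is locally bounded; since $\sigma_u$ is Radon and $p(x_1)<\infty$ forces $\int_{I\setminus[a,b]}G_\alpha(x_1,y)\,\sigma_u(dy)<\infty$, dominated convergence gives continuity of the far part at $x_1$. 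For the first (near) integral, $\sigma_u([a,b])<\infty$ again because $p(x_1)<\infty$ and $G_\alpha(x_1,\cdot)$ is bounded below by a positive constant on $[a,b]$; then $x\mapsto\int_{[a,b]}G_\alpha(x,y)\,\sigma_u(dy)$ is continuous by the same joint continuity and boundedness of $G_\alpha$ on a neighbourhood of $x_1$ times $[a,b]$, invoking dominated convergence once more.

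The only genuine subtlety — and the step I expect to be the main obstacle to state cleanly — is the finiteness $\sigma_u([a,b])<\infty$ and the integrability of $G_\alpha(x_1,\cdot)$ near the diagonal, i.e.\ making sure the potential is finite at \emph{some} point near $x_1$ and not merely $m$-a.e. Here one uses that $u$ is $\alpha$-excessive: if $u(x_1)<\infty$ then $p(x_1)\le u(x_1)<\infty$ directly from the representation; and the set where an $\alpha$-excessive function is finite is either empty or everywhere dense (in fact, by regularity there are no nonempty polar sets, so $u$ finite at one point forces $u$, hence $p$, finite on a dense set, and finiteness at a dense set of points near $x_1$ is enough to run the dominated-convergence argument). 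Assembling the three pieces — continuity of $h_\alpha$, of the near potential, and of the far potential — yields continuity of $u$ at the arbitrary point $x_1\in I$, completing the proof.
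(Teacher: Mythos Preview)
Your argument is correct, but the paper's proof is shorter and more transparent because it exploits the product structure of the Green kernel directly rather than going through a near/far decomposition with dominated convergence. Concretely, the paper splits the potential at the \emph{moving} point $x$:
\[
u(x)=w_\alpha^{-1}\varphi_\alpha(x)\int_{(l,x]}\psi_\alpha\,d\sigma_u
      +w_\alpha^{-1}\psi_\alpha(x)\int_{(x,r)}\varphi_\alpha\,d\sigma_u
      +\hat h_\alpha(x),
\]
and observes that this equals the same expression with the intervals $(l,x)$ and $[x,r)$ instead. Right- and left-continuity then follow immediately from the continuity of $\psi_\alpha,\varphi_\alpha$ and the monotone limits $\int_{(l,x+\varepsilon]}\to\int_{(l,x]}$, $\int_{(l,x-\varepsilon)}\to\int_{(l,x)}$, etc.; no separate domination argument is needed and the ``subtlety'' you worry about never arises, since finiteness of these two integrals is equivalent to $u(x)<\infty$ at the given point.

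What your approach buys is generality: it would work for any jointly continuous, locally bounded resolvent kernel, not just the factorised one-dimensional kernel. What it costs is the extra bookkeeping on the far integral and the finiteness discussion. Note also that for a regular one-dimensional diffusion the hitting-time inequality $u(x)\ge u(y)\,\E_x(e^{-\alpha\tau_y})$ with $\E_x(e^{-\alpha\tau_y})>0$ shows that an $\alpha$-excessive $u\not\equiv+\infty$ is finite everywhere, so your density-of-finite-points detour can be replaced by this one-line observation.
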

\begin{proof}
Let $u$ be an  $\alpha$-excessive function. Substituting the explicit
form of the Green kernel in the 
representation (\ref{riesz2}) yields 
\begin{align*}
u(x)=& w_\alpha^{-1}\varphi_{\alpha}(x)\int_{(l,
  x]}\psi_{\alpha}d\sigma_u+w_\alpha^{-1}\psi_{\alpha}(x)\int_{(x,
    r)}\varphi_{\alpha}d\sigma_u + \hat h_\alpha(x)
\\
=&
 w_\alpha^{-1}\varphi_{\alpha}(x)\int_{(l,
  x)}\psi_{\alpha}d\sigma_u+w_\alpha^{-1}\psi_{\alpha}(x)\int_{[x,
    r)}\varphi_{\alpha}d\sigma_u + \hat h_\alpha(x),
\end{align*}
from which evoking the continuity of $\varphi_{\alpha}$ and
$\psi_\alpha$ it easily follows that
$$
\lim_{\varepsilon\to 0+}u(x+\varepsilon)=\lim_{\varepsilon\to
  0+}u(x-\varepsilon)=u(x).
$$
\end{proof}
%here
\begin{example}
\label{absorbing}
To stress the importance of the regularity as defined in (\ref{regu})
we give an example showing that if an end point of I is absorbing then
there exist discontinuous excessive functions. Let $X$ denote a
Brownian motion on $\R^+$ absorbed at 0 and consider the function
\begin{equation*}
f(x)=\begin{cases}
1, \quad\text{if}\quad x>0,\\
\frac{1}{2},\quad\text{if}\quad x=0.
\end{cases}
\end{equation*}
Since $\PP_0(X_t=0)=1$ for all $t\geq 0$ we have for 
$x>0$
\begin{align*}
\E_x(f(X_t))%&=\E_x(1\,;\, t<{\tau}_0)+\E_x(\frac{1}{2}\,;\, t\geq {\tau}_0)\\
&=\PP_x(t<\tau_0)+\frac{1}{2}\,\PP_x(t\geq\tau_0)\\
&=1-\frac{1}{2}\,\PP_x(t\geq\tau_0)\\
&\leq 1=f(x),
\end{align*} 
and, for $x=0$,
$$
\E_0(f(X_t))=\frac{1}{2}= f(0).
$$
Clearly, also (\ref{excessive2}) holds. Consequently, $f$ is a
discontinuous excessive function.
\end{example}

%\begin{remark}

 %There are, of course, different approaches to
 % deduce the continuity of excessive functions. A well-known fact is
 % that excessive functions are continuous in the fine (or,
 % equivalently, intrinsic) topology of the process (see, e.g., \cite{blumenthalgetoor68}
 % p. 85). For regular diffusions the fine topology coincides with the
 % usual topology which then implies the continuity of the excessive
 % functions. However, since we also use the Riesz representation in the next section to study the
 % differentiability properties of excessive functions it seemed
 % natural to apply it here also for the proof of the continuity.
%\end{remark}

We conclude this subsection by pointing out some important properties of
one-dimensional diffusions which can be deduced from the potential
theoretical generalities using the explicit form of the Green kernel 
 and the continuity of
the excessive functions: 
\hfill\break\hfill
($\bullet$) Firstly, since $X$ is self dual and for all
$y\in I$ the function $x\mapsto G_\alpha(x,y)$ is bounded and
continuous it follows from (4.11) p. 290 in \cite{blumenthalgetoor68} that every point
in $I$ is regular, i.e.,  
\begin{equation}
\label{regu1}
\PP_x(\tau^+_x=0)=1\quad \forall x\in I,
\end{equation}
where 
$$
\tau^+_x:=\inf\{t>0: X_t=x\}.
$$
Consequently, by ibid. (3.13) p. 216, $X$ posseses  at every
point $x\in I$ a local time
$(L^{(x)}_t)_{t\geq 0}.$ %such that $t\mapsto L^{(x)}_t$ is a.s. continuous.
\hfill\break\hfill
($\bullet$) Secondly, again due to the self duality together with the continuity
of the sample paths, it holds  that all additive functionals of
$X$ are continuous, see ibid. p. 289. In particular, $t\mapsto L^{(x)}_t$ is a.s. continuous.
\hfill\break\hfill
($\bullet$) Thirdly, recall that an excessive function $f$  is called regular for
  $X$ if $t\mapsto f(X_t)$ is continuous on $[0,\zeta)$ (see
    ibid.  p. 287-288). Hence, from Proposition \ref{cont}
 it follows via the continuity of the sample paths that
    all (finite) excessive functions for $X$ are regular.

\section{Differentiability}
\label{DIFF}
For an increasing continuous function $F:I\mapsto \R$ and an arbitrary
$\alpha$-excessive function $u$ we introduce the one sided derivatives
of $u$ with respect to $F$:
\begin{align}
\label{derivatives1}
\frac{d^+u}{dF}(x):=&\lim_{\delta\to 0+} \frac{u(x+\delta)-u(x)}{F(x+\delta)-F(x)},
\\
\label{derivatives2}
\frac{d^-u}{dF}(x):=&\lim_{\delta\to 0+} \frac{u(x-\delta)-u(x)}{F(x-\delta)-F(x)}
\end{align}
for every $x\in I$ for which the limits on the right hand sides exist and are finite. We say that
$u$ is $F$-differentiable at $x\in I$ if
\begin{equation*}
\frac{d^+u}{dF}(x)= \frac{d^-u}{dF}(x).
\end{equation*}
Our basic result gives conditions for the $F$-differentiability of an
arbitrary $\alpha$-excessive function $u$. Recall from the Riesz
representation that there exists a Radon measure $\sigma_u$ such that 
(\ref{riesz}) holds.
\begin{thm}
\label{exc-thm}
Let $u$ and $F$ be as above and assume that functions $\psi_\alpha$ 
and $\varphi_\alpha$
 are for $\alpha>0$ $F$-differentiable
 at a point $z\in
I$. Then the left and the right $F$-derivative of $u$ exist at $z$ and
satisfy
\begin{equation}\label{diff-exc}
\frac{d^- u}{dF}(z)-\frac{d^+u}{dF}(z)\geq 0.
\end{equation}
Moreover, $u$ is $F$-differentiable at $z$  if and only if $\sigma_u(\{z\})=0$.
\end{thm}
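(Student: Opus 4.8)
The plan is to work directly from the Riesz representation \eqref{riesz} and the explicit form of the Green kernel, splitting the contribution of the representing measure $\sigma_u$ into a part near $z$ and a part away from $z$. Write, as in the proof of Proposition \ref{cont},
\begin{equation*}
u(x)= w_\alpha^{-1}\varphi_{\alpha}(x)\int_{(l,x]}\psi_{\alpha}\,d\sigma_u+w_\alpha^{-1}\psi_{\alpha}(x)\int_{(x,r)}\varphi_{\alpha}\,d\sigma_u + \hat h_\alpha(x),
\end{equation*}
where $\hat h_\alpha = c_1'\varphi_\alpha + c_2'\psi_\alpha$ is a linear combination of $\varphi_\alpha$ and $\psi_\alpha$. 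Since by hypothesis both $\varphi_\alpha$ and $\psi_\alpha$ are $F$-differentiable at $z$, the term $\hat h_\alpha$ contributes nothing to the jump $\frac{d^-u}{dF}(z)-\frac{d^+u}{dF}(z)$, so it may be discarded; likewise the part of $\sigma_u$ supported on $I\setminus(z-\eta,z+\eta)$ for small $\eta$ contributes a function of the form (constant)$\cdot\varphi_\alpha(x)$ to the left of $z$ and (constant)$\cdot\psi_\alpha(x)$ to the right (for $x$ near $z$), which is again $F$-differentiable at $z$. Hence only the mass of $\sigma_u$ in a small neighbourhood of $z$ matters.

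Next I would compute the one-sided $F$-derivatives of the localized potential $x\mapsto w_\alpha^{-1}\varphi_\alpha(x)\int_{(l,x]}\psi_\alpha\,d\sigma_u + w_\alpha^{-1}\psi_\alpha(x)\int_{(x,r)}\varphi_\alpha\,d\sigma_u$ at $z$. Taking the right difference quotient $\frac{u(z+\delta)-u(z)}{F(z+\delta)-F(z)}$ and the left one, and using the $F$-differentiability of $\psi_\alpha$, $\varphi_\alpha$ at $z$ together with the continuity of all the ingredients, one finds that the difference of the two limits reduces to a single boundary term carrying the jump: concretely, after cancelling the common $F$-differentiable factors, one is left with an expression of the form $w_\alpha^{-1}\bigl(\psi_\alpha(z)\varphi_\alpha(z)^{+}-\psi_\alpha(z)^{+}\varphi_\alpha(z)\bigr)\cdot$ times nothing — more precisely the clean outcome is
\begin{equation*}
\frac{d^-u}{dF}(z)-\frac{d^+u}{dF}(z)= w_\alpha^{-1}\bigl(\varphi_\alpha(z)\psi_\alpha(z)-\text{(off-diagonal cancellation)}\bigr)\,\sigma_u(\{z\}),
\end{equation*}
and one checks the multiplier equals a strictly positive constant (essentially $w_\alpha^{-1}$ times the value $\varphi_\alpha(z)\psi_\alpha(z)$, which is positive since $\varphi_\alpha,\psi_\alpha>0$ on $I$ and $w_\alpha>0$). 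This simultaneously proves that the left and right $F$-derivatives of $u$ exist at $z$ (the non-atomic part gives a genuine limit on each side because $\delta\mapsto\int_{(z,z+\delta)}\psi_\alpha\,d\sigma_u$ is $o(F(z+\delta)-F(z))$ when $\sigma_u$ restricted to $(z,z+\eta)$ has no atom at $z$, and symmetrically on the left), establishes \eqref{diff-exc}, and shows the $F$-differentiability holds exactly when $\sigma_u(\{z\})=0$.

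The main obstacle I expect is the careful justification of the limits of the difference quotients when $\sigma_u$ is a general Radon measure that may have atoms accumulating towards $z$ from one side: one must argue that $\frac{1}{F(z+\delta)-F(z)}\int_{(z,z+\delta)}\psi_\alpha\,d\sigma_u\to \psi_\alpha(z)\,\cdot\,\lim_{\delta\to 0+}\frac{\sigma_u((z,z+\delta))}{F(z+\delta)-F(z)}$, and that this last limit exists. This is where the $F$-differentiability of $\psi_\alpha$ and $\varphi_\alpha$ is truly used: it forces $F$ to be, near $z$, comparable to $S$ up to $F$-differentiable factors, and one can then invoke the known behaviour of $S$-potentials (the analogue of the fundamental theorem of calculus for measures against the scale function), so that the one-sided $F$-derivative of $\int G_\alpha(\cdot,y)\sigma_u(dy)$ picks up precisely the $F$-density of $\sigma_u$ plus, at an atom, a one-sided jump of size proportional to $\sigma_u(\{z\})$. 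Handling the possibility that the $F$-density itself fails to exist while the left/right derivatives still do (which is what the theorem asserts) requires phrasing everything in terms of the monotone functions $\delta\mapsto\sigma_u((z,z+\delta))$ and $\delta\mapsto\sigma_u([z-\delta,z))$ and their right/left limits at $0$, rather than assuming a density; once this bookkeeping is in place the inequality \eqref{diff-exc} and the characterization of equality fall out immediately.
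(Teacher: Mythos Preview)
Your overall strategy---start from the Riesz representation, discard the harmonic part, and compute the one-sided difference quotients---matches the paper. But the key step is mishandled.

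Your worry that one must show $\lim_{\delta\to0+}\frac{\sigma_u((z,z+\delta))}{F(z+\delta)-F(z)}$ exists, or that the non-atomic part contributes $o(F(z+\delta)-F(z))$, is a real problem, not just a technicality: for a general Radon measure this limit need \emph{not} exist and the $o$-claim is \emph{false}. (Take $F(x)=x$, $z=0$, and $\sigma_u=\sum_n a_n\varepsilon_{1/n}$ with $a_n=1/(n\log^2 n)$; then $\sigma_u((0,\delta))\sim 1/\log(1/\delta)$, which is not $o(\delta)$.) Consequently your proposed separation of the cross term into ``$F$-density plus atom'' does not go through, and the appeal to a fundamental theorem of calculus for measures is unfounded here.

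The paper avoids this entirely. Writing the right difference quotient as two $F$-differentiable pieces plus a remainder
\[
J(z,\delta)=\frac{\varphi_\alpha(z+\delta)\int_{(z,z+\delta]}\psi_\alpha\,d\sigma_u-\psi_\alpha(z+\delta)\int_{(z,z+\delta]}\varphi_\alpha\,d\sigma_u}{F(z+\delta)-F(z)},
\]
one never separates the two integrals. Because $\psi_\alpha$ is increasing and $\varphi_\alpha$ decreasing, a simple pointwise bound gives
\[
\frac{\varphi_\alpha(z+\delta)\psi_\alpha(z)-\psi_\alpha(z+\delta)\varphi_\alpha(z)}{F(z+\delta)-F(z)}\,\sigma_u((z,z+\delta])\;\le\;J(z,\delta)\;\le\;0,
\]
and the left-hand side tends to $0$ since the first factor converges (by $F$-differentiability of $\psi_\alpha,\varphi_\alpha$) while $\sigma_u((z,z+\delta])\to0$ simply because $\sigma_u$ is a measure. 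This squeeze is the missing idea; no density of $\sigma_u$ is needed.

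A second, smaller error: the jump multiplier is not $w_\alpha^{-1}\varphi_\alpha(z)\psi_\alpha(z)$. Subtracting the explicit one-sided derivatives gives
\[
\frac{d^-u}{dF}(z)-\frac{d^+u}{dF}(z)=w_\alpha^{-1}\Bigl(\tfrac{d\psi_\alpha}{dF}(z)\,\varphi_\alpha(z)-\tfrac{d\varphi_\alpha}{dF}(z)\,\psi_\alpha(z)\Bigr)\sigma_u(\{z\}),
\]
which is nonnegative because $\psi_\alpha$ is increasing and $\varphi_\alpha$ decreasing, and (strictly) positive exactly when $\sigma_u(\{z\})>0$.
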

\begin{proof}
Since functions  $\psi_\alpha$ and $\varphi_\alpha$ are assumed to be
$F$-differentiable at $z$ we may, without loss of generality, take
$c_1'=c_2'=0$ in (\ref{riesz2}), and, hence, $u$ has the representation

\begin{equation}{\label{riesz3}}
u(z)=w_\alpha^{-1}\varphi_\alpha(z)\int_{(l,z]}\psi_\alpha d\sigma_u+w_\alpha^{-1}
\psi_\alpha(z)\int_{(z,r)}\varphi_\alpha d\sigma_u.
\end{equation}
Using (\ref{riesz3}) it is seen after som simple manipulations that for $\delta>0$
\begin{align*}
\hskip-1cm\frac{u(z+\delta)-u(z)}{F(z+\delta)-F(z)}&=w_\alpha^{-1}\,\frac{\varphi_\alpha(z+\delta)
-\varphi_\alpha(z)}{F(z+\delta)-F(z)}\,\int_{(l,z]}\psi_\alpha d\sigma_u\\
&\quad\quad +w_\alpha^{-1}\,\frac{\psi_\alpha(z+\delta)-\psi_\alpha(z)}{F(z+\delta)-F(z)}\,\int_{(z,r)}\varphi_\alpha d\sigma_u\\
&\quad\quad\quad + w_\alpha^{-1}J(z,\delta),
\end{align*}
where 

\[J(z,\delta):=\frac{\varphi_\alpha(z+\delta)\int_{(z, z+\delta]}\psi_\alpha d\sigma_u
 -\psi_\alpha(z+\delta)\int_{(z, z+\delta]}\varphi_\alpha d\sigma_u}{F(z+\delta)-F(z)}.\]
Since $\psi_\alpha$ and $\varphi_\alpha$ are increasing and
decreasing, respectively, it holds
\[R(z,\delta)\leq J(z,\delta)\leq 0\]

\begin{eqnarray*}
&&\hskip-.5cm
R(z,\delta):=
\frac{\varphi_\alpha(z+\delta)\psi_\alpha(z)-\psi_\alpha(z+\delta)
\varphi_\alpha(z)}{F(z+\delta)-F(z)}\ 
\sigma_u((z,z+\delta]).
\end{eqnarray*}
Evoking that  $\sigma_u$ is a measure and  $\varphi_\alpha$ and $\psi_\alpha$
are assumed to be $F$-differentiable at $z$ we obtain %Letting here $\delta\downarrow 0$ yields
\begin{align*}
\lim_{\delta\downarrow
  0}R(z,\delta)&=\big(\frac{d\varphi_\alpha}{dF}(z)\psi_\alpha(z)-
\varphi_\alpha(z)\frac{d\psi_\alpha}{dF}(z)\big)\lim_{\delta\downarrow
  0}\sigma\{(z,z+\delta]\}
\\
&
=0.
\end{align*}
 Consequently,  
$$
\lim_{\delta\downarrow
  0}J(z,\delta)=0.
$$
It follows that $u$ has the right $F$-derivative given by
  \begin{equation}
 \label{e1}
\hskip-.5cm\frac{d^+u}{dF}(z)=w_\alpha^{-1}\left(\frac{d\varphi_\alpha}{dF}(z)
\int_{(l,z]}\psi_\alpha d\sigma_u+
\frac{d\psi_\alpha}{dF}(z)
\int_{(z,r)}\varphi_\alpha d\sigma_u\right).
\end{equation}
Analogous calculations yield for the left $F$-derivative
 \begin{equation}
 \label{e2}
\hskip-.5cm
\frac{d^-u}{dF}(z)=w_\alpha^{-1}\left(\frac{d\varphi_\alpha}{dF}(z)
\int_{(l,z)}\psi_\alpha d\sigma_u+
\frac{d\psi_\alpha}{dF}(z)
\int_{[z,r)}\varphi_\alpha d\sigma_u\right).
\end{equation}
Hence, we have 
 \begin{align}
\label{jump}
\nonumber
&\frac{d^-u}{dF}(z)-\frac{d^+u}{dF}(z)
\\
&\hskip2cm 
=
w_\alpha^{-1}\left(\frac{d\psi_\alpha}{dF}(z)
\varphi_\alpha(z)
-
\frac{d\varphi_\alpha}{dF}(z)
\psi_\alpha(z)\right) \sigma_u(\{z\})
\\
\nonumber
&\hskip2cm 
\geq 0
\end{align}
and this completes the proof. 
\end{proof}

\begin{remark} Choosing $F$ equal to the scale function yields (3.7)
  Corollary in  \cite{salminen85}. In fact, the idea of the proof is
  the same as in \cite{salminen85}. However, therein the proof is based
  explicitly on the Martin representation. Notice that taking $F=S$ in
 (\ref{jump}) yields 
\begin{equation}
\label{measu1}
 u^-(z)-u^+(z)= \sigma_u(\{z\}),
\end{equation}
which differs from the formula in the proof of (3.7)
  Corollary in  \cite{salminen85} due to the different normalizations of the
  representing measure in the Riesz and Martin representations. 
Notice also that taking $F(x)=x$ 
  gives, of course, a condition for the differentiablity in the usual sense.  
\end{remark}

We study next the differentiablity of 0-excessive functions. Hence, it
is assumed that $X$ is transient and, moreover, that the killing
measure  is identically zero. Then $\lim_{t\to\zeta}X_t= l\ {\rm or}\ r$ with
probability 1. As stated in Remark 1 after Theorem \ref{RDEC} the Riesz representation holds also for the 0-excessive
functions. In fact, in \cite{blumenthalgetoor68} only the case with $\alpha=0$ is
discussed in detail. The differentiablity of 0-excessive functions  can
be analyzed similarly as was done in Theorem \ref{exc-thm} for
$\alpha$-excessive functions. Therefore, we formulate the result as a corollary. For simplicity, it is assumed that the
boundary condition at a regular boundary point is killing. Then the Green function can be written as (see
\cite[p. 130]{itomckean74}, and \citep[p. 20]{borodinsalminen02})
\begin{align}
\nonumber
G_0(x,y)&=\int_0^\infty p(t;x,y)\,dt\\
\label{green0}
&=\begin{cases}
{\displaystyle\lim_{a\downarrow l,\, b\uparrow
  r}\frac{(S(x)-S(a)(S(b)-S(y))}{S(b)-S(a)},} \quad\text{if}\quad  x\leq y, \\
{\displaystyle\lim_{a\downarrow l,\,  b\uparrow
  r}\frac{(S(y)-S(a)(S(b)-S(x))}{S(b)-S(a)},} \quad\text{if}\quad  x\geq y.
\end{cases}
\end{align}

\begin{corollary}
\label{0exc}
Let $X$ be a transient diffusion as introduced above and $u$ a
0-excessive function of $X$. Assume that the scale function $S$ of $X$
is differentiable at a given point $z.$ Then $u$ has the left and the
right derivative at $z$ and it holds
\begin{equation}\label{diff-0exc}
\frac{d^- u}{dx}(z)-\frac{d^+u}{dx}(z)\geq 0.
\end{equation}
Moreover, $u$ is differentiable at $z$  if and only if $\sigma_u(\{z\})=0$.
\end{corollary}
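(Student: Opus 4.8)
The plan is to mimic the proof of Theorem \ref{exc-thm} almost verbatim, replacing the functions $\psi_\alpha,\varphi_\alpha$ by their $0$-level analogues $\psi_0,\varphi_0$ (the increasing and decreasing $0$-harmonic functions that encode hitting probabilities) and replacing $F$-differentiability of these functions by the single hypothesis that $S$ is differentiable at $z$. First I would record the key structural facts that make the transfer work: by Remark 1(i) after Theorem \ref{RDEC} the Riesz representation \eqref{riesz} holds for $\alpha=0$ with Green function $G_0$ given by \eqref{green0}; and since the killing measure vanishes and the boundary behaviour is killing at regular endpoints, one has $\psi_0(x)\propto S(x)-S(l)$ and $\varphi_0(x)\propto S(r)-S(x)$ (interpreted as the appropriate limits when $l$ or $r$ is infinite or inaccessible), so that $G_0(x,y)=w_0^{-1}\psi_0(x)\varphi_0(y)$ for $x\le y$ with $w_0$ the associated Wronskian constant. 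The crucial observation is that differentiability of $S$ at $z$ is exactly equivalent to $F$-differentiability (with $F(x)=x$) of both $\psi_0$ and $\varphi_0$ at $z$, because these are affine functions of $S$; moreover the harmonic part of the Riesz representation of a $0$-excessive function is, by the Corollary after Theorem \ref{MDEC}, of the form $c_1'\varphi_0+c_2'\psi_0$, and since $\psi_0,\varphi_0$ are already differentiable at $z$ this part contributes nothing to the jump, so we may take $c_1'=c_2'=0$ as in the proof of Theorem \ref{exc-thm}.

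With these substitutions in hand, the argument proceeds step by step exactly as before. Substituting the explicit form of $G_0$ into the Riesz representation \eqref{riesz2} gives
\begin{equation*}
u(z)=w_0^{-1}\varphi_0(z)\int_{(l,z]}\psi_0\,d\sigma_u+w_0^{-1}\psi_0(z)\int_{(z,r)}\varphi_0\,d\sigma_u.
\end{equation*}
Forming the difference quotient $\bigl(u(z+\delta)-u(z)\bigr)/\bigl((z+\delta)-z\bigr)$ and performing the same algebraic rearrangement produces a main term controlled by the difference quotients of $\varphi_0$ and $\psi_0$ (which converge because $S$ is differentiable at $z$) plus an error term $J(z,\delta)$ involving $\sigma_u$ restricted to $(z,z+\delta]$; the monotonicity of $\psi_0$ (increasing) and $\varphi_0$ (decreasing) sandwiches $J(z,\delta)$ between $R(z,\delta)$ and $0$, and $R(z,\delta)\to 0$ since $\sigma_u\bigl((z,z+\delta]\bigr)\downarrow \sigma_u(\{z^+\})$-type limits vanish for a Radon measure as $\delta\downarrow 0$ after the Wronskian-type prefactor tends to a finite limit. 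One then reads off the one-sided derivatives, obtaining the analogues of \eqref{e1} and \eqref{e2}, and subtracting gives
\begin{equation*}
\frac{d^- u}{dx}(z)-\frac{d^+u}{dx}(z)=w_0^{-1}\Bigl(\frac{d\psi_0}{dx}(z)\varphi_0(z)-\frac{d\varphi_0}{dx}(z)\psi_0(z)\Bigr)\sigma_u(\{z\})\ge 0,
\end{equation*}
the nonnegativity following from $\psi_0'\ge 0$, $-\varphi_0'\ge 0$ and positivity of $\varphi_0,\psi_0$; the differentiability criterion $\sigma_u(\{z\})=0$ is then immediate since the Wronskian-type coefficient is strictly positive.

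The only genuine point requiring care — and hence the main obstacle — is the handling of the boundary terms in the $\alpha=0$ case, in particular verifying that $\psi_0$ and $\varphi_0$ are indeed well-defined, finite, continuous functions on $I$ and that the representation $G_0(x,y)=w_0^{-1}\psi_0(x)\varphi_0(y)$ ($x\le y$) is valid under the standing assumptions (transience, null killing measure, killing boundary conditions). This is exactly why the paper isolates the formula \eqref{green0} and the references to \cite[p. 130]{itomckean74} and \cite[p. 20]{borodinsalminen02} just before the statement: once one accepts \eqref{green0}, the limits there are precisely $S(x)-S(l)$ and $S(r)-S(x)$ up to normalization (including the degenerate cases where a boundary is natural, in which case one of $\psi_0,\varphi_0$ may be constant or the limit interpreted appropriately), and the rest of the proof is the routine repetition of Theorem \ref{exc-thm} described above. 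I would therefore state in the proof that the argument is "completely analogous to that of Theorem \ref{exc-thm}, with $\psi_\alpha,\varphi_\alpha$ replaced by $\psi_0,\varphi_0$ and using that $S$ differentiable at $z$ is equivalent to $\psi_0$ and $\varphi_0$ being $x$-differentiable at $z$," and spell out only the boundary/Green-function identification and the final jump formula.
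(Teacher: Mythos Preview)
Your proposal is correct and follows essentially the same approach as the paper: identify $\psi_0,\varphi_0$ as affine functions of $S$ (via \eqref{green0}), observe that differentiability of $S$ at $z$ is precisely what is needed for $\psi_0,\varphi_0$ to be $x$-differentiable there, reduce the harmonic part away, and then rerun the argument of Theorem~\ref{exc-thm} with $F(x)=x$. The paper's proof is slightly terser---it writes out the representation \eqref{riesz0} only in the case $-\infty<S(l)<S(r)<+\infty$ and leaves the other boundary configurations to the reader---but the content is the same.
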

\begin{proof}
Consider formula  (\ref{riesz2}) in case $\alpha=0$ and
$-\infty<S(l)<S(r)<+\infty. $ Since $l$ and $r$ are
assumed to be killing boundaries we have 
$$
\varphi_0(x)= S(r)-S(x)\quad {\rm and}\quad  \psi_0(x)=S(x)-S(l).
$$
Consequently, we may assume, without loss of generality, that in the
representation of $u$ in  (\ref{riesz2}) $\hat h_0\equiv 0.$ Hence, 
%he
\begin{align}{\label{riesz0}}
&\hskip-1cm 
u(z)=\frac{S(r)-S(z)}{S(r)-S(l)}
\int_{(l,z]}(S(y)-S(l))\sigma_u(dy)
\nonumber
\\
&\hskip 3cm +\frac{S(z)-S(l)}{S(r)-S(l)}
\int_{(z,r)}(S(r)-S(y))\sigma_u(dy).
\end{align}
The  cases  $S(l)= -\infty,\, S(r)<\infty$ and  $S(l)> -\infty,\, S(r)=\infty$
can be handled similarly; we leave the details to the
reader. Formula (\ref{riesz0}) corresponds (\ref{riesz3}) and the
proof can be continued similarly as was done after
(\ref{riesz3}) but taking $F(x)=x.$  
\end{proof}

The basic assumption in Theorem \ref{exc-thm} is that  $\psi_\alpha$ 
and $\varphi_\alpha$ are $F$ -differentiable. In case $F=S$ this
assumption typically fails at points  $z\in I$ which are atoms of the
speed measure, i.e., $m(\{z\})>0$  (so called sticky points), and at
such points, see \cite[p. 129]{itomckean74}, \cite[p. 308]{R-Y}
(notice that there is a misprint in the formula in the middle of page
309; the term on the right hand side should be without factor 2) and
\cite[p.18]{borodinsalminen02},   
  \begin{equation}
 \label{eq11}
\varphi_{\alpha}^{+}(z)-\varphi_{\alpha}^{-}(z)=m(\{z\}){\cal G}\varphi_{\alpha}(z)= m(\{z\})\,\alpha\varphi_{\alpha}(z),
\end{equation}
and
  \begin{equation}
 \label{eq21}
\psi_{\alpha}^{+}(z)-\psi_{\alpha}^{-}(z)=m(\{z\}){\cal G}\psi_{\alpha}(z)=m(\{z\})\,\alpha\psi_{\alpha}(z).
\end{equation} 
Next theorem extends formula (\ref{measu1})  for diffusions having sticky points. 

\begin{thm}
\label{mass}
Let $u$ be an $\alpha$-excessive function of the diffusion $X.$ Then it holds
\begin{equation}\label{diff-mass}
u^-(z)-u^+(z)=\sigma_u(\{z\})- m(\{z\})\,\alpha\, u(z),
\end{equation}
where $u^+\, (u^-)$ denotes the right (left) derivative with respect to the scale function.
\end{thm}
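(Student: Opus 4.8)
The plan is to imitate the proof of Theorem~\ref{exc-thm} but work with the Riesz representation \eqref{riesz2} directly, \emph{without} assuming that $\psi_\alpha$ and $\varphi_\alpha$ are $S$-differentiable at $z$; instead I will carry the one-sided $S$-derivatives of $\psi_\alpha$ and $\varphi_\alpha$ through the computation and let the jump relations \eqref{eq11}--\eqref{eq21} absorb the extra term. First I would note that since $z$ is a fixed point, I may again discard the $\hat h_\alpha$ part of \eqref{riesz2}: writing $\hat h_\alpha = c_1'\varphi_\alpha + c_2'\psi_\alpha$, its contribution to $u^-(z)-u^+(z)$ is $c_1'(\varphi_\alpha^-(z)-\varphi_\alpha^+(z)) + c_2'(\psi_\alpha^-(z)-\psi_\alpha^+(z)) = -m(\{z\})\,\alpha\,\hat h_\alpha(z)$ by \eqref{eq11}--\eqref{eq21}, which is exactly the form of the claimed correction term restricted to the harmonic part, so the two pieces will recombine at the end and it suffices to treat $u$ given by \eqref{riesz3}.

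Next I would redo the difference-quotient computation that follows \eqref{riesz3} with $F=S$. Exactly as there, one gets
\[
\frac{u(z+\delta)-u(z)}{S(z+\delta)-S(z)} = w_\alpha^{-1}\frac{\varphi_\alpha(z+\delta)-\varphi_\alpha(z)}{S(z+\delta)-S(z)}\int_{(l,z]}\psi_\alpha\,d\sigma_u + w_\alpha^{-1}\frac{\psi_\alpha(z+\delta)-\psi_\alpha(z)}{S(z+\delta)-S(z)}\int_{(z,r)}\varphi_\alpha\,d\sigma_u + w_\alpha^{-1}J(z,\delta),
\]
and the bound $R(z,\delta)\le J(z,\delta)\le 0$ still holds. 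The only difference from Theorem~\ref{exc-thm} is that now $R(z,\delta)\to \big(\psi_\alpha^+(z)\varphi_\alpha(z) - \varphi_\alpha^+(z)\psi_\alpha(z)\big)\,\sigma_u(\{z\})$, which need not vanish; still, the limit exists, so $J(z,\delta)$ is squeezed between two convergent quantities — one of them $0$ — and I would argue it converges (it does, because the numerator of $J$ is itself $o(S(z+\delta)-S(z))$ plus a term comparable to $\sigma_u((z,z+\delta])\to\sigma_u(\{z\})$, so $\lim_{\delta\downarrow0}J(z,\delta) = w_\alpha\cdot(\text{something})\cdot\sigma_u(\{z\})$; the cleanest route is to write $\int_{(z,z+\delta]}\psi_\alpha\,d\sigma_u = \psi_\alpha(z)\sigma_u((z,z+\delta]) + o(\sigma_u((z,z+\delta]))$ using continuity of $\psi_\alpha$, and similarly for $\varphi_\alpha$). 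This gives
\[
\frac{d^+u}{dS}(z) = w_\alpha^{-1}\Big(\varphi_\alpha^+(z)\int_{(l,z]}\psi_\alpha\,d\sigma_u + \psi_\alpha^+(z)\int_{(z,r)}\varphi_\alpha\,d\sigma_u\Big),
\]
and an analogous computation on the left (now with the half-open intervals swapped, as in \eqref{e2}) gives
\[
\frac{d^-u}{dS}(z) = w_\alpha^{-1}\Big(\varphi_\alpha^-(z)\int_{(l,z)}\psi_\alpha\,d\sigma_u + \psi_\alpha^-(z)\int_{[z,r)}\varphi_\alpha\,d\sigma_u\Big).
\]

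Finally I would subtract the two expressions. Grouping terms, the difference splits as the part coming from the jump of $\sigma_u$ at $z$ — which contributes $w_\alpha^{-1}\big(\psi_\alpha^-(z)\varphi_\alpha(z)-\varphi_\alpha^-(z)\psi_\alpha(z)\big)\sigma_u(\{z\})$; this equals $\sigma_u(\{z\})$ by the Wronskian identity $w_\alpha = \psi_\alpha^-(z)\varphi_\alpha(z)-\psi_\alpha(z)\varphi_\alpha^-(z)$ — plus the part coming from the jumps of $\psi_\alpha^\pm$ and $\varphi_\alpha^\pm$, namely $w_\alpha^{-1}\big((\varphi_\alpha^--\varphi_\alpha^+)(z)\int_{(l,z]}\psi_\alpha\,d\sigma_u + (\psi_\alpha^--\psi_\alpha^+)(z)\int_{(z,r)}\varphi_\alpha\,d\sigma_u\big)$. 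Substituting \eqref{eq11}--\eqref{eq21}, this second part is $-w_\alpha^{-1}m(\{z\})\alpha\big(\varphi_\alpha(z)\int_{(l,z]}\psi_\alpha\,d\sigma_u + \psi_\alpha(z)\int_{(z,r)}\varphi_\alpha\,d\sigma_u\big) = -m(\{z\})\alpha\,u(z)$ by \eqref{riesz3} (and, once the $\hat h_\alpha$ piece is added back, by the full representation \eqref{riesz2}). Hence $u^-(z)-u^+(z)=\sigma_u(\{z\})-m(\{z\})\alpha\,u(z)$, as claimed. The main obstacle is the bookkeeping at the atom $z$: one must be careful about which one-sided derivative ($\psi_\alpha^+$ vs.\ $\psi_\alpha^-$) attaches to which half-open integral, and that the error terms in $J(z,\delta)$ genuinely vanish relative to $S(z+\delta)-S(z)$ when $\sigma_u$ also charges $z$; handled cleanly, everything collapses via the Wronskian and the stickiness formulas.
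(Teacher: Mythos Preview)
Your approach is essentially the paper's: represent $u$ via \eqref{riesz2}, compute the one-sided $S$-derivatives as in the proof of Theorem~\ref{exc-thm}, subtract, and apply \eqref{eq11}--\eqref{eq21}. Your formulas for $u^+(z)$ and $u^-(z)$ match \eqref{e11}--\eqref{e21} exactly, and your final grouping (pairing $(\varphi_\alpha^--\varphi_\alpha^+)$ with $\int_{(l,z]}$ so that \eqref{riesz3} reconstitutes $u(z)$ directly) is in fact a touch cleaner than the paper's, which pairs with $\int_{(l,z)}$ and then has to unpack $\psi_\alpha^-(z)\varphi_\alpha(z)-\varphi_\alpha^+(z)\psi_\alpha(z)=\omega_\alpha+\psi_\alpha(z)(\varphi_\alpha^-(z)-\varphi_\alpha^+(z))$ to recover the atom contribution.

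One small correction to your discussion of $J(z,\delta)$: you write that $R(z,\delta)\to(\psi_\alpha^+\varphi_\alpha-\varphi_\alpha^+\psi_\alpha)(z)\,\sigma_u(\{z\})$ and worry this may be nonzero. In fact $\sigma_u((z,z+\delta])\to 0$ because the interval is \emph{open} at $z$, so $R(z,\delta)\to 0$ exactly as in the proof of Theorem~\ref{exc-thm} (the only thing needed is that the difference quotient of $\varphi_\alpha$ and $\psi_\alpha$ has a finite one-sided limit, which it does with respect to $S$). Hence $J(z,\delta)\to 0$ by the same squeeze, and no separate argument via continuity of $\psi_\alpha,\varphi_\alpha$ is required. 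This confusion is harmless --- your stated formulas for $u^\pm(z)$ are correct --- but the aside about $J$ possibly having a nonzero limit should be dropped.
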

\begin{proof}
Due to (\ref{eq11}) and (\ref{eq21}) we may assume without loss of generality that $u$ has the Riesz representation 
\begin{equation}\label{r1}
u(z)=\int_{(l,
  r)}G_{\alpha}(z,y)\sigma_u(dy).
\end{equation}
%he
By similar calculations as in the proof of Theorem \ref{exc-thm} taking therein  $F\equiv S$  we obtain (cf. (\ref{e1}) and (\ref{e2}))
%he
  \begin{equation}
 \label{e11}
u^+(z)=w_\alpha^{-1}\left(\varphi^+_\alpha(z)
\int_{(l,z]}\psi_\alpha d\sigma_u+
\psi^+_\alpha(z)
\int_{(z,r)}\varphi_\alpha d\sigma_u\right)
\end{equation}
and
 \begin{equation}
 \label{e21}
%\nonumber
u^-(z)=w_\alpha^{-1}\left(\varphi^-_\alpha(z)
\int_{(l,z)}\psi_\alpha d\sigma_u+
\psi^-_\alpha(z)
\int_{[z,r)}\varphi_\alpha d\sigma_u\right).
\end{equation}
Subtracting (\ref{e11}) from(\ref{e21})  yields
\begin{align}\label{difference}
\nonumber 
\hskip-.5cm
u^-(z)- u^+(z)
=
%\\
%\nonumber &=
&\,\omega_{\alpha}^{-1}\Big(\varphi_{\alpha}^{-}(z)-\varphi_{\alpha}^{+}(z)\Big)\int_{(l,z)}\psi_{\alpha} d\sigma_u\\
%\nonumber 
&\quad +\omega_{\alpha}^{-1}\Big(\psi_{\alpha}^{-}(z)-\psi_{\alpha}^{+}(z)\Big)\int_{(z,r)}\varphi_{\alpha} d\sigma_u\\
&
\nonumber
\qquad +\omega_{\alpha}^{-1}\psi_{\alpha}^{-}(z)\varphi_{\alpha}(z)\sigma_u(\{z\})-\omega_{\alpha}^{-1}\varphi_{\alpha}^{+}(z)\psi_{\alpha}(z)\sigma_u(\{z\}).
\end{align}
 Using (\ref{eq11}),  (\ref{eq21}) and noticing that 
\begin{align*}
\hskip-1cm\psi_{\alpha}^{-}(z)\varphi_{\alpha}(z)-\varphi_{\alpha}^{+}(z)
\psi_{\alpha}(z)&=\psi_{\alpha}^{-}(z)\varphi_{\alpha}(z)-\varphi_{\alpha}^{-}(z)
\psi_{\alpha}(z)
\\
&\hskip1cm
+\psi_{\alpha}(z)\Big(\varphi_{\alpha}^{-}(z)-\varphi_{\alpha}^{+}(z)\Big)
\\
&
=\omega_{\alpha}+\psi_{\alpha}(z)\Big(\varphi_{\alpha}^{-}(z)-\varphi_{\alpha}^{+}(z)\Big).
\end{align*}
Identity \eqref{difference} can be written as follows
\begin{align*}
u^-(z)-u^+(z)&=-\alpha\, m(\{z\})\Big(\omega_{\alpha}^{-1}\varphi_{\alpha}(z)\int_{(l,z)}\psi_{\alpha} d\sigma_u\\
&\hspace{1.5cm} +\omega_{\alpha}^{-1}\psi_{\alpha}(z)\int_{(z,r)}\varphi_{\alpha} d\sigma_u\\
&\hspace{1.5cm} +\omega_{\alpha}^{-1}\varphi_{\alpha}(z)\psi_{\alpha}(z)\sigma_u(\{z\})\Big)+\omega_{\alpha}^{-1}\omega_{\alpha}\sigma_u(\{z\})
\\
&=-\alpha\, m(\{z\})\int_{(l,r)}G_\alpha(z,y)\sigma_u(dy)+\sigma_u(\{z\})
\\
&=-\alpha\, m(\{z\})\,u(z)+\sigma_u(\{z\})
\end{align*}
by (\ref{r1}), and the proof is complete.
\end{proof}

\section{Application to optimal stopping}

\subsection{Smooth fit}

Probably the most used method to solve optimal stopping problems (with infinite
horizon) for one-dimensional diffusions is based on the principle of smooth fit. This principle says
that the value function $V$ as defined in (\ref{osp}) meets the reward
function $g$ smoothly at the boundary points of the stopping region
$\Gamma:=\{x\,:\, V(x)=g(x)\},$ i.e., $v'(x)=g'(x)$ at the boundary points in
case $g'$ exists. The idea of the method is to guess the form of
$\Gamma$ and to find its boundary points using 
 the continuity and the differentiablity of
the proposed value function. After this, a
verification theorem (see, e.g., \O ksendal \cite[p. 215 Theorem 10.4.1]{Oks}) is needed to show that the proposed value is
indeed the right one. 

In  \cite{salminen85} a criterion for the validity of the
smooth fit (with respect to the scale function) is
derived. This criterion can extracted from Theorem \ref{smoothfit1}
below by choosing $F$ equal to $S,$ the scale function. A condition for
the smooth fit with respect to ``usual'' differentiation is obtained by
taking $F$ to be the identity mapping. The criterion holds also for $\alpha=0$ (transient case with general
killing measure). We formulate the result for a left boundary point of
$\Gamma;$ obviously there is a similar result for a right boundary point.
\begin{thm}
\label{smoothfit1}
Let $z$ be a left boundary point of $\Gamma,$ i.e., 
$[z,z+\varepsilon_1)\subset \Gamma$ and $(z-\varepsilon_2,z)\subset
  \Gamma^{\,c}$for some positive $\varepsilon_1$ and  $\varepsilon_2.$
Let $F$ be a continuous and increasing function and assume that the
reward function 
$g$ and the functions $\varphi_\alpha$
and $\psi_\alpha,\ \alpha\geq 0,$ are  $F$-differentiable at $z.$ Then the value
function $V$ in (\ref{osp})  is $F$-differentiable at $z$ and the smooth fit with respect
to $F$   holds:
\begin{equation}
\label{sfeq}
\frac{d^+V}{dF}(z)= \frac{d^-V}{dF}(z)=\frac{dg}{dF}(z).
\end{equation}
\end{thm}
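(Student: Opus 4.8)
The plan is to exploit the fact, recalled in the introduction via the Snell--Dynkin theorem, that the value function $V$ is the smallest $\alpha$-excessive function dominating $g$, and that on the continuation region $V$ is $\alpha$-harmonic while on the stopping region $V=g$. Fix the left boundary point $z$ of $\Gamma$. First I would locate the atoms of the Riesz representing measure $\sigma_V$ near $z$. On the open set $(z-\varepsilon_2,z)\subset\Gamma^c$ the value function is $\alpha$-harmonic, so $\sigma_V$ puts no mass there; in particular $\sigma_V(\{z-\delta\})=0$ for small $\delta>0$. By Theorem~\ref{exc-thm} (applied with the given $F$, using that $\psi_\alpha,\varphi_\alpha$ are $F$-differentiable at such points — which one gets for a.e.\ point, and one can approach $z$ from the left along such points) $V$ is $F$-differentiable on $(z-\varepsilon_2,z)$, and similarly $g=V$ is $F$-differentiable there. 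The key structural input is then that $V-g\geq 0$ everywhere, $V(z)=g(z)$, and $V=g$ on $[z,z+\varepsilon_1)$, so that $z$ is a minimum of $V-g$ approached from the left by points where both $V$ and $g$ are $F$-differentiable.

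Next I would show the two one-sided $F$-derivatives of $V$ at $z$ exist and compute them. The right derivative is immediate: since $V\equiv g$ on $[z,z+\varepsilon_1)$ and $g$ is $F$-differentiable at $z$, we get $\frac{d^+V}{dF}(z)=\frac{dg}{dF}(z)$. For the left derivative, since $V$ is $\alpha$-excessive and $\psi_\alpha,\varphi_\alpha$ are $F$-differentiable at $z$, Theorem~\ref{exc-thm} guarantees that both one-sided $F$-derivatives of $V$ exist at $z$ and that $\frac{d^-V}{dF}(z)-\frac{d^+V}{dF}(z)\geq 0$. So it remains only to prove the reverse inequality $\frac{d^-V}{dF}(z)\leq \frac{dg}{dF}(z)$. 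This is where the minimality/domination of $V$ enters. Consider $h:=V-g$ on $(z-\varepsilon_2,z]$: it is nonnegative, vanishes at $z$, and is $F$-differentiable on the interval with $h(z)=0$; hence its left $F$-derivative at $z$, namely $\frac{d^-V}{dF}(z)-\frac{dg}{dF}(z)$, is necessarily $\leq 0$ (a nonnegative function hitting $0$ from the left has nonpositive left derivative there, provided the derivative exists). Combining the two inequalities yields $\frac{d^-V}{dF}(z)=\frac{d^+V}{dF}(z)=\frac{dg}{dF}(z)$, which is \eqref{sfeq}.

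The one subtlety to handle carefully is the existence of the left $F$-derivative of $g$ and of $V$ at $z$ and the passage to the limit in the difference quotient of $h=V-g$; we assumed $g$ is $F$-differentiable at $z$, and Theorem~\ref{exc-thm} gives the one-sided $F$-derivatives of $V$ at $z$, so $h$ does have a left $F$-derivative at $z$ equal to $\frac{d^-V}{dF}(z)-\frac{dg}{dF}(z)$. The elementary lemma needed is: if $h\geq 0$ on $(z-\varepsilon,z]$, $h(z)=0$, and $\lim_{\delta\downarrow0}\frac{h(z)-h(z-\delta)}{F(z)-F(z-\delta)}$ exists, then this limit is $\le 0$ — which is immediate since each quotient $\frac{-h(z-\delta)}{F(z)-F(z-\delta)}\le 0$ as $F$ is increasing. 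I expect the main (only real) obstacle to be the bookkeeping around whether one needs $\psi_\alpha,\varphi_\alpha$ to be $F$-differentiable merely at $z$ or also at nearby points: in fact only $F$-differentiability at $z$ is needed for the argument above, since Theorem~\ref{exc-thm} delivers the one-sided derivatives of $V$ at $z$ directly, so the hypotheses as stated suffice. The $\alpha=0$ case goes through verbatim using Corollary~\ref{0exc} in place of Theorem~\ref{exc-thm}.
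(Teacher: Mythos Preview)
Your proposal is correct and takes essentially the same approach as the paper: compute $\frac{d^+V}{dF}(z)=\frac{dg}{dF}(z)$ from $V=g$ on $[z,z+\varepsilon_1)$, obtain $\frac{d^-V}{dF}(z)\le\frac{dg}{dF}(z)$ from $V\ge g$ with $V(z)=g(z)$, and close with the reverse inequality $\frac{d^-V}{dF}(z)\ge\frac{d^+V}{dF}(z)$ supplied by Theorem~\ref{exc-thm}. Your first paragraph (on the support of $\sigma_V$ in the continuation region and $F$-differentiability of $\psi_\alpha,\varphi_\alpha$ at nearby points) is superfluous---as you yourself recognize in the third paragraph, only the hypotheses at $z$ are needed.
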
 
\begin{proof}
Since $V>g$ on $\Gamma^c$ and $V=g$ on $\Gamma$ 
we have 
\begin{align*}
\frac{d^+V}{dF}(z)&=\lim_{\delta\to 0+}
\frac{V(z+\delta)-V(z)}{F(z+\delta)-F(z)}
=\lim_{\delta\to 0+}
\frac{g(z+\delta)-g(z)}{F(z+\delta)-F(z)}
\\
&=\frac{d^+g}{dF}(z)
=\frac{dg}{dF}(z)
\end{align*}
and
\begin{align*}
\frac{d^-V}{dF}(z)&=\lim_{\delta\to 0+}
\frac{V(z-\delta)-V(z)}{F(z-\delta)-F(z)}
=\lim_{\delta\to 0+}
\frac{V(z)-V(z-\delta)}{F(z)-F(z-\delta)}
\\
&\leq
\lim_{\delta\to 0+}
\frac{g(z)-g(z-\delta)}{F(z)-F(z-\delta)}
\\
&=
\frac{d^-g}{dF}(z)
=\frac{dg}{dF}(z).
\end{align*}
Consequently,
$$
\frac{d^-V}{dF}(z)\leq \frac{d^+V}{dF}(z), 
$$
and, hence, (\ref{diff-exc}) yields
$$
\frac{d^-V}{dF}(z)= \frac{d^-V}{dF}(z),
$$
proving the claim.
\end{proof}

Specializing to transient diffusions without killing inside the
state space and applying Corollary \ref{0exc} yields the following result which is the contents of 
Theorem 2.3 in \cite{Pes}, see also \cite{peskirshiryaev06} section 9.1.

\begin{corollary}
\label{smoothfit2}
Let $X$ be a transient diffusion as introduced 
in Corollary \ref{0exc}. Let $z$ be a point such that  $g(z)=V(z).$ 
If the reward
function $g$ and the scale function $S$ are differentiable at $z$ then the smooth fit holds at $z$:
\begin{equation}
\label{sfeq2}
\frac{d^+V}{dx}(z)= \frac{d^-V}{dx}(z)=\frac{dg}{dx}(z).
\end{equation}
\end{corollary}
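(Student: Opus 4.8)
The plan is to obtain Corollary \ref{smoothfit2} as an immediate specialization of Theorem \ref{smoothfit1} to the transient, killing-free setting treated in Corollary \ref{0exc}. First I would recall that, by the Snell--Dynkin characterization quoted in the introduction, the value function $V$ is the smallest $0$-excessive function dominating $g$; in particular $V$ is a $0$-excessive function of the transient diffusion $X$, and Corollary \ref{0exc} applies to it. The only genuine point to check is that the hypothesis of Theorem \ref{smoothfit1} is met in the present $F(x)=x$ situation: one needs $\varphi_0$ and $\psi_0$ to be differentiable at $z$ in the ordinary sense.

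Second, I would exhibit this differentiability from the explicit forms of $\varphi_0$ and $\psi_0$ recorded just before Corollary \ref{0exc}. When the boundary behaviour is killing one has, up to affine normalization, $\varphi_0(x)=S(r)-S(x)$ and $\psi_0(x)=S(x)-S(l)$ (with the obvious modifications when $S(l)=-\infty$ or $S(r)=+\infty$, handled exactly as in the proof of Corollary \ref{0exc}). Hence $\varphi_0$ and $\psi_0$ are, up to constants, $\pm S$ plus a constant, so whenever $S$ is differentiable at $z$ in the usual sense both $\varphi_0$ and $\psi_0$ are differentiable at $z$ as well. Since by assumption $g$ is differentiable at $z$ and, as just noted, so are $\varphi_0$ and $\psi_0$, Theorem \ref{smoothfit1} is applicable at $z$ with $F(x)=x$ and $\alpha=0$.

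Third, I would note that the point $z$ with $g(z)=V(z)$ lies in the stopping region $\Gamma=\{x:V(x)=g(x)\}$; one may assume $z$ is a boundary point of $\Gamma$ (if $z$ is interior to $\Gamma$ then $V\equiv g$ in a neighbourhood of $z$ and \eqref{sfeq2} is trivial), and then the left-boundary version of Theorem \ref{smoothfit1} (or its mirror image for a right boundary point) gives exactly
\[
\frac{d^+V}{dx}(z)=\frac{d^-V}{dx}(z)=\frac{dg}{dx}(z),
\]
which is \eqref{sfeq2}.

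The main obstacle is essentially bookkeeping rather than substance: one must make sure the affine normalization freedom used to pass from the abstract $\varphi_0,\psi_0$ to the concrete scale-based expressions does not interfere with the differentiability claim, and that the three boundary-type cases for $(S(l),S(r))$ are all covered; but in each case $\varphi_0$ and $\psi_0$ differ from $\pm S$ only by additive and multiplicative constants, so differentiability of $S$ at $z$ transfers to them, and there is no real difficulty. Everything else is a direct invocation of Theorem \ref{smoothfit1} and Corollary \ref{0exc}.
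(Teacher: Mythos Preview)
Your proposal is correct and follows essentially the same route the paper intends: the corollary is obtained by specializing Theorem~\ref{smoothfit1} to $\alpha=0$ and $F(x)=x$, using that in the transient killing-free setting $\varphi_0$ and $\psi_0$ are affine functions of $S$ (as displayed before Corollary~\ref{0exc}) and hence differentiable at $z$ whenever $S$ is. One small bookkeeping point: your dichotomy ``interior of $\Gamma$'' versus ``left/right boundary point as in Theorem~\ref{smoothfit1}'' does not literally cover the case where $z$ is an isolated point of $\Gamma$, but this is handled by the same one-sided comparison with $g$ on both sides combined with inequality~\eqref{diff-0exc}, and the paper does not spell this out either.
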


\subsection{Example: Sticky Brownian motion}

In this section we study an optimal stopping problem when the underlying
process is a sticky Brownian motion with drift  $\mu\leq 0$. We let
$X=(X_t)_{t\geq 0}$ denote this process and, by definition, we take it
to be sticky at 0. The speed measure and the scale function of $X$ are
given for $\mu<0$ by
$$
m(dx)=2\e^{2\mu x}dx+2c{\varepsilon}_{\{0\}}(dx),\quad {\rm and}\quad
S(x)=\displaystyle{\frac{1}{2\mu}(1-e^{-2\mu x})},
$$
respectively, where ${\varepsilon}_{\{0\}}$ denotes the Dirac measure at
0 and the stickyness parameter $c$ is positive. In case, $\mu=0$ put
$S(x)=x,$ i.e., $X$ is in natural scale. The infinitesimal operator
associated with $X$ (see Ito and Mckean \cite[p. 111-112]{itomckean74}) is given for $x\not= 0$ by
\[
{\cal G}=\frac{1}{2}\frac{d^2}{dx^2}+\mu\frac{d}{dx}
\]
and defined by continuity at 0, that is, 
${\cal G}f(0)={\cal G}f(0+)={\cal G}f(0-).$ 
The domain is taken to be 
$$
{\cal D}:=\{f\,:\, f\in C^2_b(\R), {\cal G}f\in C_b(\R), f^{+}(0)-f^{-}(0)
=2c\,{\cal G}f(0)\}.
$$ 
Notice that in our case $S'(0)=1,$ and, hence, for
instance, $ f^+(0)=f'(0+).$  

To find the fundamental solutions $\psi_\alpha$ and $\varphi_\alpha$
associated with $X$ recall that the unique positive (up to a multiplicative
constants) increasing and decreasing solutions the ODE
$$
\frac{1}{2}u''(x)+\mu u'(x)=\alpha u(x)
$$
are given by
$$
\psi^o_\alpha(x)=\e^{(\theta-\mu)x}\quad {\rm
  and}\quad 
\varphi^o_\alpha(x)=\e^{-(\theta+\mu)x}.
$$
respectively, where $\theta:= \sqrt{2\alpha+\mu^2}.$ Consequently, we
should find constants $A, B, C,$ and $D$ such that
\begin{equation*}
{\psi}_\alpha(x):=\left\{
\begin{array}{lr}
\psi^o_\alpha(x),\quad& x\leq 0,\\
&\\
A\psi^o_\alpha(x)+B\varphi^o_\alpha(x),\quad &x\geq 0,
\end{array}\right. 
\end{equation*}
and
\begin{equation*}
{\varphi}_\alpha(x):=\left\{
\begin{array}{lr}
C\varphi^o_\alpha(x)+D\varphi^o_\alpha(x),\quad &x\leq 0,\\
&\\
\varphi^o_\alpha(x),\quad & x\geq 0,
\end{array}\right. 
\end{equation*}
are continuous (at 0) and, moreover, satisfy the condition (cf. (\ref{eq11}) and (\ref{eq21}))
\begin{equation}\label{funda-equa}
u'(0+)-u'(0-)= 2c\alpha u(0).
\end{equation}
Straightforward calculations show that 
\begin{equation}
\label{psipsi}
{\psi}_\alpha(x)=\left\{
\begin{array}{lr}
{\e}^{(\theta-\mu)x},& x\leq 0,\\
(1+\gamma)\e^{(\theta-\mu)x}-\gamma \e^{-(\theta+\mu)x},& x\geq 0,
\end{array}\right.
\end{equation}
and 
\begin{equation}
\label{phiphi}
{\varphi}_\alpha(x)=\left\{
\begin{array}{lr}
(1+\gamma)\e^{-(\theta+\mu)x}-
\gamma\e^{(\theta-\mu)x},&x\leq 0,\\
\e^{-(\theta+\mu)x},& x\geq 0,
\end{array}\right.
\end{equation}
where $\gamma:=c\alpha/\theta.$ We remark that these
expressions coincide in case $\mu=0$ with the formulas in  
\cite[p. 123]{borodinsalminen02}. 

We study now the OSP as given in (\ref{osp}) with $g(x)=(1+x)^+$
\begin{equation}\label{osp_sticky}
V(x):=\sup_{\tau\in {\cal
    M}}\E_x\left(e^{-\alpha\tau}(X_\tau+1)^+\right)=
\E_x\left(e^{-\alpha\tau^*}(X_{\tau^*}+1)^+\right),
\end{equation}
where $X$ is the sticky Brownian motion introduced above.

\medskip
\begin{proposition}
\label{BM_SBM}
 In case $\alpha=0$ the problem is equivalent with the
corresponding problem for ordinary Brownian motion with drift. The
smooth fit holds and the optimal stopping time is $\tau^*:=\inf\{t\,:\, X_t\geq (1-2|\mu|)/2|\mu|\}.$
\end{proposition}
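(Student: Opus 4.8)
The plan is to exploit the fact that when $\alpha=0$ the discounting factor $e^{-\alpha\tau}$ disappears and the stickyness at $0$ becomes irrelevant for the optimal stopping problem, since a sticky point only delays the process in time without affecting the (time-independent) reward. More precisely, since $g(x)=(1+x)^+$ does not involve time, for any stopping time $\tau$ we have $\E_x(g(X_\tau))=\E_x(g(B_{\sigma}))$ where $B$ is ordinary Brownian motion with drift $\mu$ obtained from $X$ by removing the sticky time (the time-change argument: sticky Brownian motion is a time-change of ordinary Brownian motion with drift, and the range of the two processes coincides). Hence the value function $V$ coincides with the value function of the classical problem $\widetilde V(x)=\sup_\tau \E_x((B_\tau+1)^+)$, and the two stopping regions coincide. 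So the first step is to reduce, via this observation, to the well-known problem for Brownian motion with drift $\mu<0$.

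Next I would solve the reduced classical problem. Since $\mu\le 0$, $B$ drifts to $-\infty$ (when $\mu<0$), so $\psi_0$, the increasing $0$-harmonic function, must be taken bounded: from the ODE $\tfrac12 u''+\mu u'=0$ the decreasing solution is constant and the other is $e^{-2\mu x}=e^{2|\mu|x}$, so in natural scale $S(x)=\tfrac{1}{2\mu}(1-e^{-2\mu x})$, which is bounded above by $1/(2|\mu|)$. The candidate stopping region is a right half-line $[b,\infty)$ (one stops when $X$ is large, because $g$ is increasing there and the process drifts down), and $V$ is linear in the scale $S$ on the continuation region $(-\infty,b)$. Writing $V(x)=A\,(1+S(x))$-type ansatz, or more directly $V(x)=k\,(S(x)-S(b))+g(b)$ adjusted to be $0$-harmonic and to satisfy $V(b)=g(b)=1+b$ and the smooth fit $V'(b)=g'(b)=1$, and then imposing that $V$ is bounded (which forces using $S$ rather than the linear function $x$), I would solve for $b$. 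Using $S'(x)=e^{-2\mu x}=e^{2|\mu|x}$, the smooth-fit equation $k\,S'(b)=1$ together with $k\,(\,\lim_{x\to-\infty}S(x)-S(b))$ matching the behaviour at $-\infty$ (so that $V$ stays bounded and dominates $g$ near $-\infty$, where $g\equiv 0$) pins down $b$. A short computation gives $b=(1-2|\mu|)/(2|\mu|)$, matching the claimed $\tau^*=\inf\{t:X_t\ge (1-2|\mu|)/(2|\mu|)\}$.

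Then I would verify smooth fit and optimality. Smooth fit at the boundary point $b$ follows immediately from Corollary \ref{smoothfit2} (or Theorem \ref{smoothfit1} with $F$ the identity): here $g(x)=(1+x)^+$ and the scale function $S$ are both differentiable at $b$ (provided $b\ne 0$, i.e. $|\mu|\ne 1/2$; the degenerate case can be noted separately), and $z=b$ is a left boundary point of $\Gamma=[b,\infty)$, so the corollary gives $V'(b)=g'(b)$. For the verification that the candidate $V$ is genuinely the value function, I would check that $V$ is $0$-excessive (it is $0$-harmonic on the continuation region, equals $g$ on $\Gamma$, is $C^1$ by smooth fit, and $\mathcal{G}V\le 0$ there because $g$ is convex/affine appropriately), that $V\ge g$ everywhere, and apply the standard verification theorem (e.g. \O ksendal \cite[Theorem 10.4.1]{Oks}); alternatively invoke the Snell–Dynkin characterization quoted in the introduction, namely that the smallest $0$-excessive majorant of $g$ is the value function, and confirm our $V$ is that majorant.

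The main obstacle I expect is the justification of the reduction step — carefully arguing that the sticky point at $0$ does not change the value when $\alpha=0$. One must make precise that every stopping time for $X$ corresponds, under the time-change, to a stopping time for the non-sticky Brownian motion with drift achieving the same reward, and conversely, and that the range (hence the set of attainable values of $g(X_\tau)$, including the behaviour as $\tau\to\zeta$) is unaffected. This is intuitively clear and standard for one-dimensional diffusions (the scale function and hence the harmonic structure of $X$ and of the non-sticky process agree away from the speed measure; at $\alpha=0$ only the scale matters), but it needs to be stated cleanly. Everything after that is a routine computation of the free-boundary $b$ and an appeal to the smooth-fit corollary and a verification theorem.
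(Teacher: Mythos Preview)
Your proposal is correct, but the reduction step you take differs from the paper's main argument. You argue via time-change: sticky Brownian motion with drift is a time-change of ordinary Brownian motion with drift, and since $\alpha=0$ kills the discounting, delaying the process at $0$ is harmless for the reward. This is exactly the content of the \emph{Remark} the paper places after the proof, not the proof itself. The paper's actual proof is more in keeping with its representation-theoretic theme: it observes that the Green kernel $G_0$ (and hence the whole class of $0$-excessive functions, via the Martin representation) depends only on the scale function $S$, not on the speed measure $m$. Since the sticky and non-sticky processes share the same $S$, their $0$-excessive functions coincide, and by the Snell--Dynkin characterization (value $=$ smallest $0$-excessive majorant of $g$) the value functions and stopping regions must agree. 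This bypasses entirely the issue you flag as the ``main obstacle,'' namely matching stopping times across the time-change; the paper never needs to construct such a correspondence.

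Your approach buys a more probabilistic, self-contained argument that does not lean on the Riesz/Martin machinery, at the cost of having to make the time-change correspondence of stopping times precise (including what happens as $\tau\to\infty$). The paper's approach buys a one-line reduction once the representation theory is in place, and illustrates the paper's thesis that this theory is the right lens. For the remainder, the paper simply cites Taylor~\cite{taylor68} for the explicit solution and the smooth fit, whereas you recompute $b$ and invoke Corollary~\ref{smoothfit2}; either is fine.
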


%he
\begin{proof} 
The value function of the problem is the smallest 0-excessive majorant
of the reward function. Recall that the Green function in case
$\alpha=0$ and there is no killing inside $I$ is determined 
by linear combinations of the scale function (see (\ref{green0}) for a
example). Since  the scale functions of $X$ and
the ordinary Brownian motion with drift are equal it follows from the
Martin representation that the
classes of $0$-excessive functions for these processes are
identical. Consequently, in the considered OSPs the value functions and the optimal stopping times are
equal. The solution of the latter problem 
was found already by Taylor
\cite{taylor68}, see also, e.g., \cite{mucci78}, \cite[p. 124-5]{shiryayev78}, and \cite{salminen85}, 
and, herefrom, it is clearly seen that the smooth fit holds.
\end{proof}
\begin{remark}
Another
explanation/proof of Proposition \ref{BM_SBM} is that making a state sticky in BM does not change the
probabilities of hitting points. Since in case $\alpha=0$ it does
not "cost to wait" the problems with or without the sticky point have the same solutions.
\end{remark}

We specialize now to case $\mu=0.$ It is proved in
\cite{croccemordecki12} for $c=1$ that the smooth fit does not hold when the 
discounting parameter $\alpha$ is in the interval
$[\alpha_1,\alpha_2),$ where $\alpha_1=(\sqrt{1+4c}-1)^2/8c$ and
  $\alpha_2=1/2.$ We wish to study this phenomenon via the
  representing measure of the value function. Consider 
 the following functions defined for $x\not= -1,\, 0$
\begin{align*}
\hskip-.5cm
&s(x):={\varphi}_\alpha(x)g'(x)-{\varphi}'_\alpha(x)g(x)
\\
&=\left\{
\begin{array}{lr}
0,& x< -1,\\
{\e}^{-\sqrt{2\alpha}\,x}\big((1+x)\sqrt{2\alpha}+1\big)&
\\
\hskip0.5cm +c\sqrt{2\alpha}
\big((1+x)\,\sqrt{2\alpha}\,\ch({\sqrt{2\alpha}\,x})+\sh({\sqrt{2\alpha}\,x})\big)
,& -1<x<0,
\\
{\e}^{-\sqrt{2\alpha}\,x}\big((1+x)\sqrt{2\alpha}+1\big),& 0<x,\\
\end{array}\right.
\end{align*}
and 
%\begin{eqnarray*}
\begin{align*}
%\label{uandt}
\hskip-.5cm
&t(x):=g(x){\psi}'_\alpha(x)-g'(x){\psi}_\alpha(x)
\\
&=\left\{
\begin{array}{lr}
0,& x< -1,\\
{\e}^{\,\sqrt{2\alpha}\,x}\big((1+x)\sqrt{2\alpha}-1\big),& -1<x<0,\\
{\e}^{\,\sqrt{2\alpha}\,x}\big((1+x)\sqrt{2\alpha}-1\big)&
\\
\hskip0.5cm +c\sqrt{2\alpha}
\big((1+x)\,\sqrt{2\alpha}\,\ch({\sqrt{2\alpha}\,x})-\sh({\sqrt{2\alpha}\,x})\big)
,& 0<x.
\end{array}\right.
\end{align*}
%{eqnarray*}
Notice that these functions are multiples of expressions in (\ref{10}) and (\ref{11}), of the Martin representing measure  if on the RHS we use $g$ instead of $u.$  It is straightforward to check the following properties of $s$ and
$t$:
\begin{description}
\item{(s1)}\quad $x\mapsto s(x)$ is decreasing for $x>-1,$
\item{(s2)}\quad $\lim_{x\to +\infty}s(x)=0,$
\item{(s3)}\quad   $\lim_{x\uparrow
  0}s(x)=\sqrt{2\alpha}+1 +2\alpha c,$\quad $\lim_{x\downarrow 0}s(x)=\sqrt{2\alpha}+1.$
\item{(t1)}\quad $x\mapsto t(x)$ is increasing for $x>-1,$
\item{(t2)}\quad  $\lim_{x\downarrow -1}t(x)=-\e^{-\sqrt{2\alpha}}<0,$\quad
  $\lim_{x\to +\infty}t(x)=+\infty,$   
\item{(t3)}\quad $\lim_{x\uparrow
  0}t(x)=\sqrt{2\alpha}-1,$\quad $\lim_{x\downarrow 0}t(x)=\sqrt{2\alpha}-1 +2\alpha c.$
\end{description}
 
\noindent Let $x^*$ denote the unique solution (if it exists) of the equation
$t(x)=0$ for $x>-1, x\not=0$; in case there is no solution we put $x^*=0.$   
Let $x_o>\max\{0,x^*\}$ and define
$$
\nu^o_g((x,+\infty]):=\frac{\psi_\alpha(x_o)}{ w_\alpha\,
 g(x_o)}\,s(x),\qquad x\geq x_o,
$$
and
\begin{equation}
\label{zero}
\nu^o_g([-\infty,x)):=
\left\{
\begin{array}{lr}
0,& x\leq x^*,\\
\displaystyle{\frac{\varphi_\alpha(x_o)}{ w_\alpha\,
 g(x_o)}\,t(x)},& x^*< x\leq x_o.
\end{array}\right.
\end{equation}
where $w_\alpha=2\sqrt{2\alpha}+2\alpha c.$ From the properties of $s$ and $t$ it is seen that these definitions
induce a Borel measure on $\R.$ Using the definition of the Wronskian
$w_\alpha$ we obtain
$$
\nu^o_g([-\infty,x_o))+\nu^o_g((x_o,+\infty])=1.
$$
Therefore, setting $\nu^o_g(\{x_o\})=0$ makes $\nu^o_g$  a 
probability measure. Notice also that
$$
\nu^o_g(\{-\infty\})=\lim_{x\to -\infty}\nu^o_g([-\infty,x))=0.
$$
and 
$$
\nu^o_g(\{+\infty\})=\lim_{x\to +\infty}\nu^o_g((x,+\infty])=0.
$$
The probability measure   $\nu^o_g$
 yields via the representation 
formula \eqref{martin-repre}  the $\alpha$-excessive function
\begin{align}
\label{Vo}
V_{o}(x):=
\left\{
\begin{array}{lr}
\displaystyle{\frac{g({x^*})}{g(x_o)\psi_\alpha({x^*})}}\,\psi_\alpha(x),& 
x\leq {x^*}.
\\
&
\\
\displaystyle{\frac{1}{g(x_o)}}\,g(x),& x\geq {x^*}.
\end{array}\right.
\end{align}
In this context  we call  $\nu^o_g$ the Martin representing
measure of $V_o.$ Clearly, the function
$x\mapsto V^*(x):=g(x_o)V_o(x)$ does not depend on $x_o.$
We conclude with the following  
\medskip
\begin{proposition}
\label{majo}
 The function $V^*$ is the value function of OSP \eqref{osp_sticky}, i.e., $V^*$  is the smallest 
$\alpha$-excessive majorant of $g.$ The optimal stopping time is
 $\tau^*=\inf\{t\,:\, X_t\geq x^*\}.$ In particular, for  $\alpha\in
 [\alpha_1,\alpha_2]$ with $\alpha_1=(\sqrt{1+4c}-1)^2/8c$ and
  $\alpha_2=1/2$ it holds that $x^*=0,$ 
\begin{equation}
\label{jumpjump}
\frac{d^-V^*}{dx}(0)-\frac{d^+V^*}{dx}(0)=\sqrt{2\alpha}-1\leq 0,
\end{equation}
and the Riesz representing measure has an atom at 0:
\begin{equation}
\label{Ratom}
\sigma_{V^\star}(\{0\})=\sqrt{2\alpha}-1 + 2\alpha c.
\end{equation}
In case, $\alpha=1/2$  the smooth fit holds and $\sigma_{V^\star}(\{0\})>0.$ If $\alpha=\alpha_1$ the
smooth fit fails and $\sigma_{V^\star}(\{0\})=0.$
\end{proposition}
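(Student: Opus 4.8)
The plan is to verify that $V^*$ has the three characterizing properties of the value function of \eqref{osp_sticky} --- it is $\alpha$-excessive, it dominates $g$, and it is the smallest such function --- and then to read off \eqref{jumpjump} and \eqref{Ratom} from the explicit piecewise shape of $V^*$ together with Theorem \ref{mass}. That $V^*$ is $\alpha$-excessive is built into its construction: $\nu^o_g$ was verified above to be a probability measure on the Martin compactification $[-\infty,+\infty]$ of $\R$, so the converse part of Theorem \ref{MDEC} applies to it, and evaluating \eqref{martin-repre} at $\nu^o_g$ produces precisely the piecewise function recorded in \eqref{Vo}; hence $V^*=g(x_o)V_o$ is $\alpha$-excessive and does not depend on $x_o$. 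Moreover the reward $g(x)=(1+x)^+$ satisfies the standard integrability condition, so the Snell--Dynkin characterization of $V$ as the smallest $\alpha$-excessive majorant of $g$ is available.

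To check $V^*\ge g$ it suffices to look at $x<x^*$, since $V^*=g$ on $[x^*,\infty)$ by \eqref{Vo}. On $(-\infty,-1]$ one has $g\equiv 0<V^*$, and on $(-1,x^*)$, where $g(x)=1+x$ is differentiable,
\[
\Big(\frac{g}{\psi_\alpha}\Big)'(x)=-\frac{t(x)}{\psi_\alpha(x)^{2}}\ge 0,
\]
because by (t1) the function $t$ is increasing with $t(x)\le 0$ for $x<x^*$ (when $x^*=0$ this uses $t(0-)=\sqrt{2\alpha}-1\le 0$). Thus $g/\psi_\alpha$ is nondecreasing on $(-\infty,x^*]$, so $g(x)\le \frac{g(x^*)}{\psi_\alpha(x^*)}\,\psi_\alpha(x)=V^*(x)$ there, and $V^*$ is an $\alpha$-excessive majorant of $g$; in particular $V\le V^*$. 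For the reverse inequality set $\tau^*=\inf\{t:X_t\ge x^*\}$. By continuity of paths $X_{\tau^*}=x^*$ and $\tau^*=\tau_{x^*}$ when the process starts in $(-\infty,x^*)$, so the It\^o--McKean formula $\E_x(\e^{-\alpha\tau_{x^*}})=\psi_\alpha(x)/\psi_\alpha(x^*)$ for $x\le x^*$ yields
\[
\E_x\!\big(\e^{-\alpha\tau^*}g(X_{\tau^*})\big)=g(x^*)\,\frac{\psi_\alpha(x)}{\psi_\alpha(x^*)}=V^*(x)\quad(x\le x^*),
\]
while for $x\ge x^*$ the left side is $g(x)=V^*(x)$ trivially. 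Hence $V(x)\ge\E_x(\e^{-\alpha\tau^*}g(X_{\tau^*}))=V^*(x)$, so $V=V^*$ and $\tau^*$ is optimal.

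It remains to locate $x^*$ when $\alpha\in[\alpha_1,\alpha_2]$ and to compute the jump. From (t1)--(t3), $t$ is increasing on $(-1,\infty)$ with $t(-1+)<0<t(+\infty)$; its zero on $(-1,0)$ would occur at $1/\sqrt{2\alpha}-1$, which lies in $(-1,0)$ only for $\alpha>\alpha_2=1/2$, whereas for $\alpha\le 1/2$ one has $t<0$ throughout $(-1,0)$ and the sign of $t$ on $(0,\infty)$ is governed by $t(0+)=\sqrt{2\alpha}-1+2\alpha c$, which is $\ge 0$ exactly when $\alpha\ge\alpha_1$. Consequently $x^*=0$ for $\alpha\in[\alpha_1,\alpha_2]$, so by \eqref{Vo} (with $g(0)=\psi_\alpha(0)=1$) one has $V^*(x)=\psi_\alpha(x)=\e^{\sqrt{2\alpha}\,x}$ for $x\le 0$ and $V^*(x)=g(x)=1+x$ for $x\ge 0$. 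Differentiating at $0$ gives $\frac{d^-V^*}{dx}(0)=\sqrt{2\alpha}$ and $\frac{d^+V^*}{dx}(0)=1$, which is \eqref{jumpjump}; since here the scale function is $S(x)=x$, $m(\{0\})=2c$ and $V^*(0)=1$, Theorem \ref{mass} applied at $z=0$ reads $\sqrt{2\alpha}-1=\sigma_{V^*}(\{0\})-2\alpha c$, i.e.\ \eqref{Ratom}. Finally $\alpha=1/2$ gives jump $0$ (smooth fit) with $\sigma_{V^*}(\{0\})=c>0$, while at $\alpha=\alpha_1$ one has $t(0+)=0$, hence $\sigma_{V^*}(\{0\})=0$ and jump $\sqrt{2\alpha_1}-1=-2\alpha_1c<0$ (smooth fit fails). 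The main obstacle I expect is the first half --- confirming that the function \eqref{Vo} is genuinely $\alpha$-excessive and dominates $g$, and correctly sorting the three regimes that determine $x^*$; once $x^*=0$ is established the rest is substitution into Theorem \ref{mass}.
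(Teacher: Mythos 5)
Your proposal is correct, but the key step is argued along a genuinely different route than the paper's. For minimality, the paper never leaves the representation theory: it assumes a smaller $\alpha$-excessive majorant $\hat V$ exists, compares the Martin representing measures of $\hat V$ and $V^*$ (which must agree on $[x^*,\infty)$ via \eqref{10}--\eqref{11}), and derives a contradiction — in the delicate case $x^*=0$ by showing $\hat V(x)>V^*(x)$ for $x<0$ using monotonicity of $\varphi_\alpha/\psi_\alpha$ and the inequality $\varphi_\alpha(x)>\psi_\alpha(x)$ for $x<0$. You instead run the classical verification argument: $V^*$ is an $\alpha$-excessive majorant, hence $V\le V^*$ by the Snell--Dynkin characterization, while the It\^o--McKean formula $\E_x(\e^{-\alpha\tau_{x^*}})=\psi_\alpha(x)/\psi_\alpha(x^*)$ shows that the candidate time $\tau^*$ attains $V^*$, giving $V\ge V^*$; this is shorter and yields the optimality of $\tau^*$ explicitly (the paper's written proof leaves that to the general theory), at the price of invoking the Snell--Dynkin identification, which the paper also takes as given in its introduction. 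For the quantitative claims the two arguments run in opposite directions: the paper computes the Martin atom $\nu_{V^*}(\{0\})$ from the construction (formula \eqref{constant}), converts it to the Riesz atom via \eqref{martin-riesz}, and then obtains the jump \eqref{jumpjump} from Theorem \ref{mass}, whereas you read the jump $\sqrt{2\alpha}-1$ directly off the explicit piecewise form \eqref{Vo} (a shortcut the paper itself notes is possible) and then invert \eqref{diff-mass} to deduce \eqref{Ratom}; both are equivalent, though the paper's order showcases the representing measure, which is the point of the example. Your identification of the regimes for $x^*$ via the zero of $t$ and the signs of $t(0-)$, $t(0+)$ matches the paper's intent (exact for $c=1$, the case of the cited reference).
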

\begin{proof}
By the construction, the function $V^*$ is $\alpha$-excessive. To prove
that $V^*$ is a majorant of $g$ is straightforward and elementary from the explicit
expressions. For a more sophisticated proof, notice that on $(-1,x^*)$ the function
$q(x):=g(x)/\psi_\alpha(x)$ is increasing since
$q'(x)=-t(x)/\psi^2_\alpha(x)>0$ on $(-1,x^*)$ (and for $x\not=0$ if
$0<x^*$). Consequently, for $x<x^*$
$$
\frac{g(x)}{\psi_\alpha(x)}< \frac{g(x^*)}{\psi_\alpha(x^*)}
\Leftrightarrow g(x)<V^*(x).
$$

Assume next that there exists an $\alpha$-excessive majorant $\hat V$
smaller than $V^*$. Consider first the case where the equation  $t(x)=0$ has a unique
root on $(-1,+\infty)\setminus \{ 0\} .$ We let, as above, $x^*$ denote this root. Since  $\hat V$ is assumed to be an
$\alpha$-excessive majorant of $g$ smaller than $V^*$ it holds that  $\hat V(x)=V^*(x)=g(x)$ for $x\geq x^*.$
Consequently, the Martin representing measures of  $\hat V$ and $V^*$ 
are equal on $[x^*,+\infty)$ and given by  (\ref{10}) and
  (\ref{11}). However, because $t(x^*)=0$ the representing measure
of $\hat V$   does not put mass on $[-\infty,x^*].$ Hence, the
representing measures of $\hat V$ and $V^*$ are equal and so, by the
uniqueness of the Martin representation, $\hat V=V^*.$ In case $t$ does not
have a zero on  $(-1,+\infty)\setminus \{ 0\}$  the
Martin representing measure of $V^*$ has an atom at $\{ 0\}$ given by
\begin{equation}
\label{atzero}
\nu_{V^*}(\{0\})= c^*\,t(0+),
\end{equation}
where $c^*$ is a non-negative constant given explicitly in
(\ref{constant}). Since the representing measures of $V^*$ and $\hat
V$ are equal on $(0,+\infty)$ and it is assumed that $\hat V\not\equiv
V^*$ we must have 
$$
 \nu_{V^*}(\{0\})>\nu_{\hat V}(\{0\})\geq 0
$$
and
$$
 \nu_{V^*}(\{0\})=\nu_{\hat V}([-\infty, 0])>0. 
$$
Consider now the Martin representations of $V^*$ and $\hat V$ for
$x<0:$
\begin{equation*}%\label{martin-repre}
V^*(x)=\int_{(-\infty,+\infty)}\frac{G_{\alpha}(x,y)}{G_{\alpha}(x_0,y)}\,\nu_{V^*}(dy)
\end{equation*}
and
\begin{equation*}%\label{martin-repre}
\hat V(x)=\int_{(-\infty,+\infty)}\frac{G_{\alpha}(x,y)}{G_{\alpha}(x_0,y)}\,\nu_{\hat V}(dy)
+\frac{\varphi_\alpha(x)}{\varphi_\alpha(x_o)}\,\nu_{\hat V}(\{-\infty\}),
\end{equation*}
respectively. We show that, in fact, $\hat V(x)>V^*(x)$ for all $x<0$
contradicting the assumption that $\hat V$ is smaller than $V^*.$
Indeed, for $x<0$
\begin{align*}
&\hskip-4cm\hat V(x)-V^*(x)
\\
&\hskip-3cm 
= \int_{(-\infty,x]}\frac{\psi_{\alpha}(y)\varphi_\alpha(x)}{\psi_{\alpha}(y)\varphi_\alpha(x_o)}\,\nu_{\hat V}(dy)
 +\int_{(x,0]}\frac{\psi_{\alpha}(x)\varphi_\alpha(y)}{\psi_{\alpha}(y)\varphi_\alpha(x_o)}\,\nu_{\hat
  V}(dy)
\\
\hskip 4cm 
&
+\frac{\varphi_\alpha(x)}{\varphi_\alpha(x_o)}\,\nu_{\hat
  V}(\{-\infty\}) -  \frac{\psi_{\alpha}(x)\varphi_\alpha(0)}{\psi_{\alpha}(0)\varphi_\alpha(x_o)}\,\nu_{ V^*}(\{0\}).
\end{align*}
Using that $\varphi_\alpha(0)=\psi_{\alpha}(0)=1$ and
$\varphi_\alpha(x_o)>0$ it is seen that for
$x<0$
\begin{equation}
\label{hatstar}
 \hat V(x)-V^*(x)>0
\end{equation}
is equivalent with
$$
 \varphi_\alpha(x)\,\nu_{\hat
  V}([-\infty,x)) + 
 \psi_{\alpha}(x)
 \, \int_{(x,0]} \frac{\varphi_\alpha(y)}{\psi_{\alpha}(y)}\,\nu_{ \hat V}(dy) 
-  \psi_{\alpha}(x)\,\nu_{ V^*}(\{0\})>0.
$$
Since $y\mapsto \varphi_\alpha(y)/\psi_{\alpha}(y)$ is decreasing 
(\ref{hatstar}) holds if
\begin{equation}
\label{hatstar1}
 \varphi_\alpha(x)\,\nu_{\hat
  V}([-\infty,x)) + 
 \psi_{\alpha}(x)\,\nu_{\hat
  V}((x,0])-  \psi_{\alpha}(x)\,\nu_{ V^*}(\{0\})>0.
\end{equation}
Observing that 
$$
\nu_{ V^*}(\{0\})=\nu_{\hat
  V}([-\infty,x)) + \nu_{\hat
  V}((x,0])
$$
it is seen that (\ref{hatstar1}) is true if for all $x<0$
\begin{equation*}
 \varphi_\alpha(x) - \psi_{\alpha}(x)>0,
\end{equation*}
and to check this is elementary from  (\ref{psipsi}) and  (\ref{phiphi}) or follows directly from the monotonicity. This completes the proof that $V^*$ is the smallest $\alpha$-excessive majorant of $g$.

It remains to prove (\ref{jumpjump}) and (\ref{Ratom}). Letting  $x\to
0+$ in (\ref{zero}) it is seen (cf. (\ref{atzero})) that  
\begin{equation}
\label{constant}
\nu_{V^*}(\{0\})= c^*\,t(0+)= g(x_o)\, \nu^o_g(\{0\})=\frac{\varphi_{\alpha}(x_o)}{\omega_{\alpha}}(\sqrt{2\alpha}-1 + 2\alpha c).
\end{equation}
Using \eqref{martin-riesz} we find the atom of the Riesz representing measure of $V^*$
\begin{equation*}
\sigma_{V^*}(\{0\})=\frac{1}{G_\alpha(x_o,0)}\, \nu_{V^*}(\{0\})=\sqrt{2\alpha}-1 + 2\alpha c.
\end{equation*}
It follows from \eqref{diff-mass} (and can also be checked directly
from (\ref{Vo})) that
\begin{align*}
\frac{d^-V^*}{dx}(0)-\frac{d^+V^*}{dx}(0)&=\sigma_{V^*}(\{0\})-2\alpha m(\{0\}) V^*(0)\\
&=\sqrt{2\alpha}-1 + 2\alpha c-2\alpha c\\
&=\sqrt{2\alpha}-1\leq 0,
\end{align*}
as claimed.
\end{proof}
\begin{figure}[!ht]
\begin{center}
\includegraphics[totalheight=5cm]{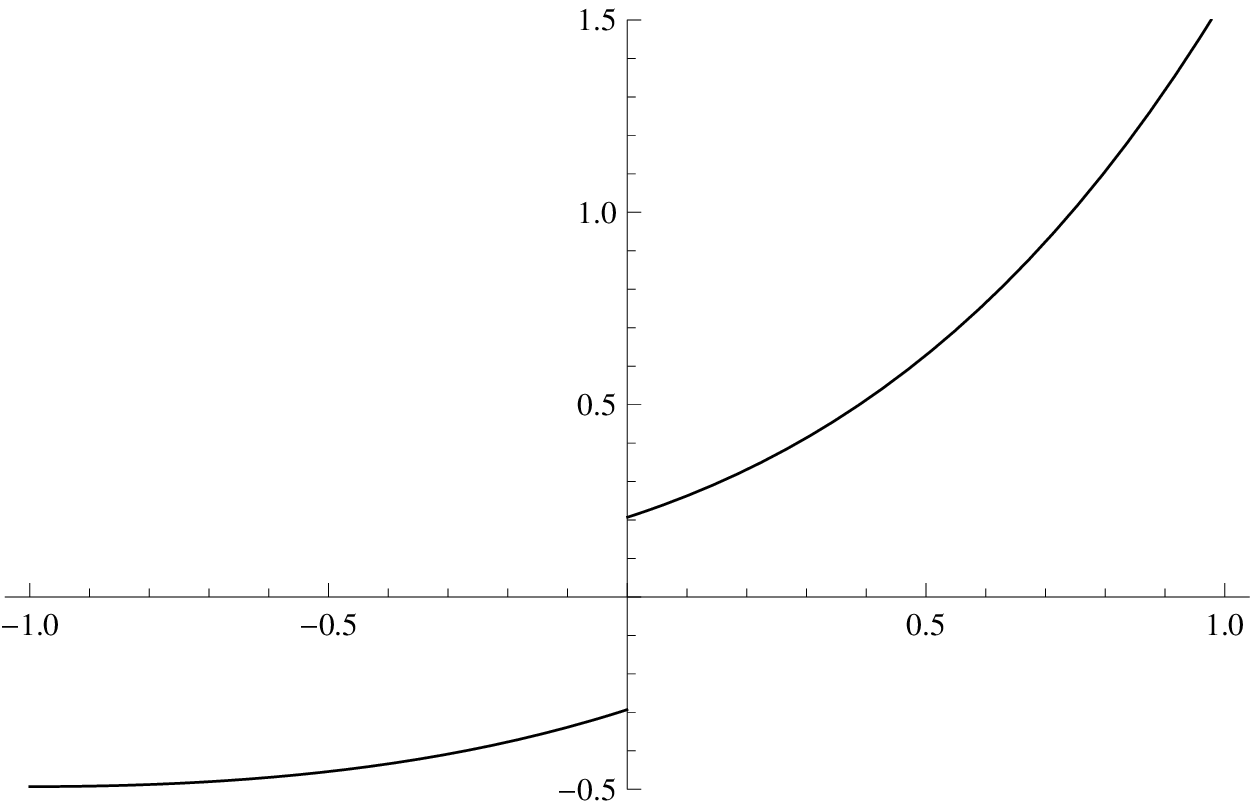}
            \caption{{\small Function $x\mapsto t(x)$, $\alpha=0.25$, $c_1=1$,  $x^*=0$.}}
\end{center}
             
\end{figure}
 
\begin{figure}[!ht]
\begin{center}
  \includegraphics[totalheight=5cm]{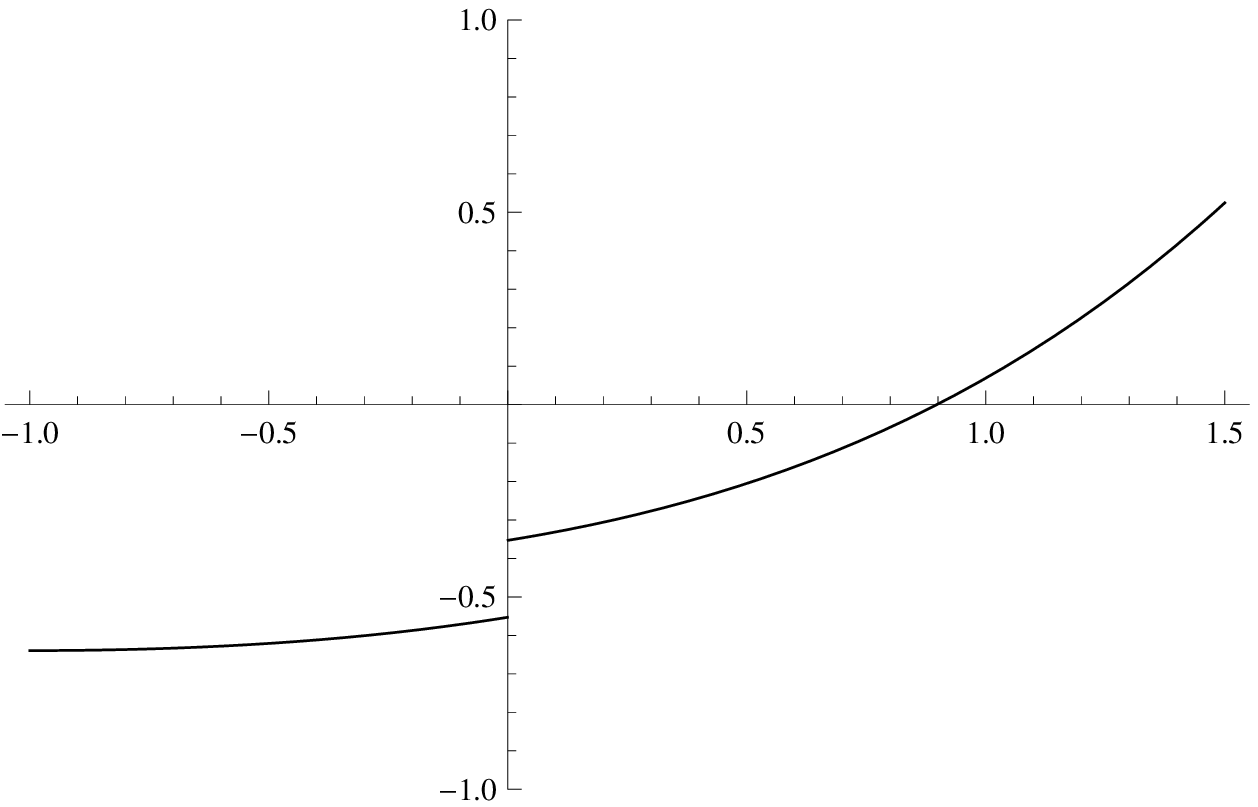}
            \caption{{\small Function $x\mapsto t(x)$, $\alpha=0.1$, $c_1=1$, $x^*>0.$}}
\end{center}
  \end{figure}
\begin{figure}[!ht]
\begin{center}
\includegraphics[totalheight=5cm]{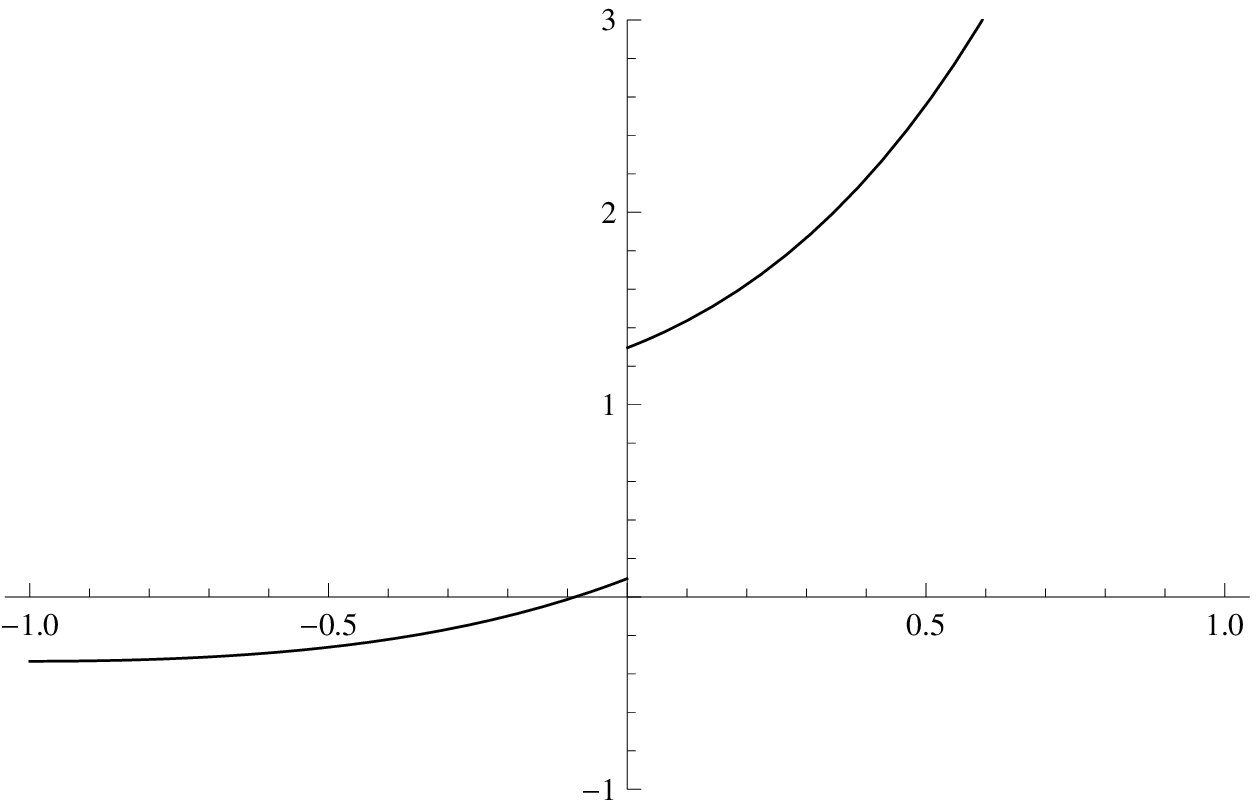}
            \caption{{\small Function $x\mapsto t(x)$, $\alpha=0.6$, $c_1=1$, $x^*<0$.}}
\end{center}
\end{figure}
\newpage
%\noindent
%{\bf Acknowledgements.} The authors would like to thank the reviewer for comments which lead to an improvement of the paper.
%\bibliographystyle{plain}
%%%%\bibliographystyle{alpha}
%\bibliography{yor1}

\end{document}